\newtheorem{thm}{Theorem}[section]
\newtheorem{lem}[thm]{Lemma}
\newtheorem{prop}[thm]{Proposition}
\newcommand{\thmref}[1]{Theorem~\ref{#1}}
\newcommand{\lemref}[1]{Lemma~\ref{#1}}
\newcommand{\propref}[1]{Proposition~\ref{#1}}
\theoremstyle{remark}
\newtheorem{rmk}{Remark}[section]
\begin{document}

\title[Moment of Hecke eigenvalues]
{First moment of Hecke eigenvalues at the integers represented by binary quadratic forms}

\author{Manish Kumar Pandey and Lalit Vaishya}
\address{(Manish Kumar Pandey ) Department of Mathematics,  UPES, Dehradun-248007, Uttarakhand, India}
\email{gilbardopandey@gmail.com, manishkumar.p@srmap.edu.in}
\address{( Lalit Vaishya) School of Mathematics, The Institute of Mathematical Sciences, A CI of Homi Bhabha
National Institute, CIT Campus, Taramani, Chennai 600 113, India,}
\email{lalitvaishya@gmail.com, lalitv@imsc.res.in}

\subjclass[2010]{Primary 11F30, 11F11, 11M06; Secondary 11N37}
\keywords{ Hecke eigenvalues, Fourier coefficients of cusp form, Rankin-Selberg $L$ function, Symmetric power $L$ functions,  Asymptotic behaviour, Binary quadratic form}

\date{\today}
 
\maketitle

\begin{abstract}

In the article, we consider a question concerning the estimation of summatory function of the Fourier coefficients of Hecke eigenforms indexed by a sparse set of integers. In particular, we provide an estimate for the following sum; 
\begin{equation*}
\begin{split}
S(f, \mathcal{Q}; X ) &:=  \sideset{}{^{\flat }}\sum_{n= \mathcal{Q}(\underline{x}) \le X \atop \gcd(n,N) =1  } \lambda_{f}(n),
\end{split}
\end{equation*}
where $\flat$ means that sum runs over the square-free positive integers, $\lambda_{f}(n)$ denotes the normalised $n^{\rm th}$ Fourier coefficients of a Hecke eigenform $f$ of integral weight $k$ for the congruence subgroup $\Gamma_{0}(N)$ and $\mathcal{Q}$ is a primitive integral positive-definite binary quadratic forms of fixed discriminant $D<0$ with the class number $h(D)=1$.  As a consequence, we determine the size, in terms of conductor of associated $L$-function, for the first sign change of Hecke eigenvalues indexed by the integers which are represented by $\mathcal{Q}$. This work is an improvement and generalisation of the previous results. 


\end{abstract}

\section{Introduction and statements of the results}

Let $M_{k}(\Gamma_{0}(N), \chi)$ and $S_{k}(\Gamma_{0}(N), \chi)$ denote the ${\mathbb C}$- vector space of modular forms and cusp forms respectively of integral weight $k$ for the congruence subgroup $\Gamma_{0}(N)$ with nebentypus $\chi$. Moreover, if the  nebentypus is trivial, we denote the space of modular forms and  cusp forms by $M_{k}(\Gamma_{0}(N))$ and $S_{k}(\Gamma_{0}(N))$, respectively. A cusp form $f \in S_{k}(\Gamma_{0}(N))$ is said to be a Hecke eigenform (newform) if $f$ is a common eigenform for all the Hecke operators and the Atkin-Lehner $W$-operators. The space of   Hecke eigenforms (newforms) is denoted by $S^{new}_{k}(\Gamma_{0}(N))$. The Fourier coefficients of a Hecke eigenform satisfy the famous Ramanujan-Petersson bound which follows from the work of Deligne \cite{Deligne}. With these notations, let $f \in S_{k}(\Gamma_{0}(N))$ be a Hecke eigenform with the Fourier series expansion at the cusp $\infty$ given by:
\begin{equation}\label{Hol-Four Exp}
f(\tau)=\sum_{n=1}^\infty \lambda_f(n)n^{\frac{k-1}{2}}q^{n},
\end{equation}
where $q:=e^{2\pi i \tau}$ and $\tau \in \mathbb{H}$. $f$ is said to be normalised if $\lambda_f(1)=1$. The Fourier coefficients $\lambda_f(n)$ are known as the normalised $n^{th}$-Fourier coefficients of $f$. The normalised Fourier coefficient $\lambda_{f}(n)$ is a multiplicative function and satisfies the Hecke relation \cite[Eq. (6.83)]{Iwaniec}:
\begin{equation}\label{HeckeR}
\lambda_{f}(m)\lambda_{f}(n) = \sum_{d \vert \gcd(m,n)}  \lambda_{f}\left(\frac{mn}{d^2}\right),  
\end{equation}
 for all positive integers $m$ and $n$ coprime to $N$.  From Deligne's estimate of Fourier coefficients of cusp forms, we have 
\begin{equation} \label{lambda-coefficient-bound}
|\lambda_{f}(n)| \le \tau(n) \ll_{\epsilon} n^{\epsilon}, 
\end{equation}
for  any arbitrary small  $\epsilon > 0,$ where $\tau(n)$ denotes the number of positive divisors of $n$.

The arithmetic behaviour and distribution of values of arithmetical functions is the center of attraction in analytic number theory. The randomness in the behaviour of these arithmetical functions leads to many interesting results. In this context, the arithmetical function \linebreak $n \mapsto \lambda_{f}(n)$ (where $\lambda_{f}(n)$ is the $n^{\rm th}$ Fourier coefficients of a Hecke eigenform $f$) is one of the interesting host. The problems become more fascinating and harder if one considers the arithmetic behaviour and distribution over the sparse set of positive integers. Iwaniec and Kowalski \cite{HIwaniec} studied the  distribution of $\{\lambda_{f}(p): p- {\rm prime}\}$ and proved that there exist a positive constant $c$ such that
$$
\displaystyle{\sum_{p\le X}} \lambda_{f}(p) \ll_{f} X \exp \left(-c \sqrt{\log X}\right),
$$
which is an analogue of the prime number theorem for Hecke eigenforms. In this regard, a most popularly known problem is studied by Blomer \cite{blomer}, where he considers the distribution of the sequence $\{ \lambda_{f}(q(n)): n \in \mathbb{N}\},$ where $q(x)$ is a monic quadratic polynomial. More precisely, he proves that 
$$
\displaystyle{\sum_{n\le X}} \lambda_{f}(q(n)) \ll X^{\frac{6}{7}+\epsilon},
$$
for any $\epsilon>0.$ For a polynomial with more than one variable, the problem has been studied   broadly for many arithmetic functions such as Von Mangoldt function ($n \mapsto \Lambda(n)$) and generalised divisor functions ($n \mapsto \tau_{k}(n)$) etc. In his remarkable work, Acharya \cite{Ratna} has studied a two variable analog of the sum studied in the work of Blomer \cite{blomer}, where he studied  the distribution of $\{\lambda_{f}(q(a,b))\}$ where $q(x,y) = x^2+y^2$ and obtained an estimate for the summatory function  $\displaystyle{\sum_{k,l \in \mathbb{Z} \atop k^2+l^2 \le X}} \lambda_{f}(q(k,l)).$  For details, we refer to \cite{Ratna}.

In this article, our motivation is to generalize the result of Acharya \cite{Ratna} by studying the distribution of $\{\lambda_{f}(\mathcal{Q}(\underline{x}))\}$ where $\mathcal{Q}$ is a primitive integral positive-definite binary quadratic forms of fixed discriminant $D<0$ with the class number $h(D)=1.$ In previous works, the second author study the higher power moment $\displaystyle{\sideset{}{^{\flat }}\sum_{n= \mathcal{Q}(\underline{x}) \le X \atop \gcd(n,N) =1 } } \lambda_{f}^{j}(n)$ for $j \le 8$ in \cite{Lalit, LalitV}, and estimates for more general higher power moment has been carried in \cite{Manish-Lalit}. Before stating our result, we fix some notations.

\bigskip
Throughout the paper, Let $\mathcal{Q}(\underline{x})$ be a primitive integral positive-definite binary quadratic (reduced) form given by; $\mathcal{Q}(\underline{x}) = ax_1^2 + b x_1x_2 + c x_2^2,$ where $\underline{x} = (x_1,x_2)\in \mathbb{Z}^2$, $a, b, c \in \mathbb{Z}$ with $\gcd(a, b, c) = 1$ and fixed discriminant $D = b^2-4ac < 0$. Further,  we assume that for such discriminant $D$, the class number  $h(D)$ is $ 1$. For details, we refer to \cite[chapter 2]{Cox}. 

We consider the following sum for an upper-bound estimate:
\begin{equation}\label{Sum-PIBQF}
\begin{split}
S(f, \mathcal{Q}; X )  &:=  \sideset{}{^{\flat }}\sum_{n= \mathcal{Q}(\underline{x}) \le X \atop \gcd(n,N) =1  } \lambda_{f}(n),
\end{split}
\end{equation}
where $\lambda_{f}(n)$ denotes the $n^{\rm th}$ Fourier coefficient of a Hecke eigenform $f$ and symbol $\flat $ means that the sum runs over all square-free integers.
We also produce the explicit dependency in terms of weight $k$ and level $N$ of the Hecke eigenform $f$, and discriminant $D$ of $\mathcal{Q}.$ More precisely, we obtain the following estimate.

\begin{thm}\label{PropUpper}
Let  $f \in S_{k}(\Gamma_{0}(N))$ be a normalised Hecke eigenform and $\mathcal{Q}$ be a reduced form of discriminant $D$ with the class number $h(D)=1$. For sufficiently large $X>0$ and any arbitrary small $\epsilon>0,$ we have  
\begin{equation*}
\begin{split}
S(f, \mathcal{Q}; X )  &=  \sideset{}{^{\flat }}\sum_{n= \mathcal{Q}(\underline{x}) \le X \atop \gcd(n,N) =1  } \lambda_{f}(n) \ll_{\epsilon} {(Nk^{2}|D|)}^{\frac{1}{2}+\epsilon} X^{\frac{1}{2}+\epsilon}.\\
\end{split}
\end{equation*}
where the implied constant is absolute.
\end{thm}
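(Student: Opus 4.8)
The plan is to translate this arithmetic sum into a partial sum of the Dirichlet coefficients of an entire degree-four $L$-function, and then to estimate that partial sum by shifting a contour to the critical line and applying convexity with explicit conductor dependence.

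First I would exploit the hypothesis $h(D)=1$. Since the class number is one, every form of discriminant $D$ is equivalent to the principal (reduced) form, so $\mathcal{Q}$ represents $n$ precisely when $n$ is a norm from the imaginary quadratic order of discriminant $D$, and for $\gcd(n,D)=1$ the representation number obeys the classical formula $r_{\mathcal{Q}}(n)=w\sum_{d\mid n}\chi_D(d)$, where $\chi_D=\left(\tfrac{D}{\cdot}\right)$ is the Kronecker symbol modulo $|D|$ and $w$ is the (bounded) number of units. Equivalently, the theta series $\theta_{\mathcal{Q}}(\tau)=\sum_{\underline{x}}q^{\mathcal{Q}(\underline{x})}$ is, up to the constant $w$, the weight-one Eisenstein series attached to $\chi_D$, whose Hecke parameters at a good prime $p$ are $\{1,\chi_D(p)\}$. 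Detecting the squarefree condition by $\mathbf{1}^{\flat}(n)=\sum_{d^2\mid n}\mu(d)$, I would thereby reduce $S(f,\mathcal{Q};X)$ to partial sums of the coefficients $a_n:=\lambda_f(n)\,(1*\chi_D)(n)$ with $\gcd(n,ND)=1$, the finitely many primes dividing $ND$ contributing only bounded correction factors.

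Next I would identify the generating series. By the Hecke relations \eqref{HeckeR} and the Rankin--Selberg unfolding of $\lambda_f$ against the Eisenstein series $\theta_{\mathcal{Q}}$, one has $\sum_n a_n n^{-s}=L(s,f)\,L(s,f\otimes\chi_D)\,L(2s,\chi_D)^{-1}$ up to an Euler product over $p\mid ND$, and the factor $L(2s,\chi_D)^{-1}$ is holomorphic and bounded for $\Re s\ge \tfrac12+\epsilon$. Crucially, because $f$ is a cusp form both $L(s,f)$ and $L(s,f\otimes\chi_D)$ are entire, so the degree-four product has no pole and there is no main term. The arithmetic conductor of $L(s,f)$ is $N$ and that of $L(s,f\otimes\chi_D)$ is $\asymp N|D|^2$, whence the analytic conductor of the product is $\asymp (Nk^2|D|)^2(1+|t|)^4$ on the line $\Re s=\tfrac12$. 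I would then apply Perron's formula with a smooth weight and shift the contour to $\Re s=\tfrac12+\epsilon$, crossing no poles; on this line the convexity (Phragm\'en--Lindel\"of) bounds
\begin{equation*}
L(\tfrac12+it,f)\ll_{\epsilon}\bigl(Nk^2(1+|t|)^2\bigr)^{\frac14+\epsilon},\qquad L(\tfrac12+it,f\otimes\chi_D)\ll_{\epsilon}\bigl(N|D|^2k^2(1+|t|)^2\bigr)^{\frac14+\epsilon}
\end{equation*}
multiply to $\ll_{\epsilon}(Nk^2|D|)^{\frac12+\epsilon}(1+|t|)^{1+\epsilon}$; the smoothing forces rapid decay in $t$, so the $t$-integral converges and costs only $(Nk^2|D|)^{\epsilon}$. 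Combining the $X^{\frac12+\epsilon}$ from the shifted line with the removal of the smooth weight yields the asserted bound.

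The main obstacle I anticipate is keeping the dependence on $N$, $k$, and $|D|$ completely explicit and uniform. This requires a convexity principle in which the implied constant is independent of the conductor, the correct computation of the analytic conductor of the twist $f\otimes\chi_D$ as $\asymp N|D|^2k^2$ (including the archimedean factor $k^2$ coming from the weight), and control of the auxiliary Euler factors introduced both by the bad primes $p\mid ND$ and by the squarefree Möbius device, so that together they contribute at most $(Nk^2|D|)^{\epsilon}X^{\epsilon}$. The bookkeeping in truncating Perron's integral and in removing the smoothing is routine but must be arranged so as not to reintroduce spurious powers of $1+|t|$ that would spoil the convergence exploited above.
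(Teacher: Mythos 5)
Your reduction is structurally the same as the paper's: the hypothesis $h(D)=1$ gives $r_{\mathcal{Q}}(n)=w_D\sum_{d\mid n}\chi_D(d)$, the generating Dirichlet series factors as $L(s,f)\,L(s,f\otimes\chi_D)$ times a harmless Euler factor holomorphic in $\Re(s)>\tfrac12$ (paper's Lemma 2.1; the paper writes the squarefree-restricted Euler product directly, which avoids the bookkeeping your M\"obius device $\sum_{d^2\mid n}\mu(d)$ would need, but that is cosmetic), both factors are entire, and your conductor computation $\asymp(Nk^2|D|)^2(1+|t|)^4$ is right. The genuine gap is in the final estimation on the line $\Re(s)=\tfrac12+\epsilon$. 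The product of the two pointwise convexity bounds grows like $(Nk^2|D|)^{1/2+\epsilon}(1+|t|)^{1+\epsilon}$, while the Mellin transform of a weight smoothed at scale $Z$ decays only like $X^{1/2+\epsilon}/(1+|t|)$ throughout the range $|t|\le X/Z$; the rapid decay you invoke sets in only for $|t|\gg X/Z$. So the integrand is of size $X^{1/2+\epsilon}(Nk^2|D|)^{1/2+\epsilon}(1+|t|)^{\epsilon}$ up to the effective truncation $T\approx X/Z$, and the $t$-integral costs a full factor $T^{1+\epsilon}$, not $(Nk^2|D|)^{\epsilon}$. Since unsmoothing costs $O(Z^{1+\epsilon})$ by Deligne's bound, you cannot take $Z$ as large as $X$; optimizing $Z$ against $X^{3/2}/Z$ gives $Z\approx X^{3/4}$ and a final bound of order $X^{3/4+\epsilon}$ (times conductor powers), which falls short of the claimed $X^{1/2+\epsilon}$. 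Pointwise convexity alone cannot reach the exponent $\tfrac12$: this is exactly the sentence ``the $t$-integral converges and costs only $(Nk^2|D|)^{\epsilon}$'' failing.

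What the paper does at this step, and what your argument is missing, is an averaged input: decompose $1\le|t|\le T$ dyadically, apply Cauchy--Schwarz on each block $[T_1/2,T_1]$, and invoke the second-moment bounds $\int_{T_1}^{2T_1}\bigl|L(\tfrac12+\epsilon+it,f)\bigr|^2\,dt\ll_{\epsilon}(Nk^2T_1^2)^{1/2+\epsilon}$ and $\int_{T_1}^{2T_1}\bigl|L(\tfrac12+\epsilon+it,f\otimes\chi_D)\bigr|^2\,dt\ll_{\epsilon}(Nk^2|D|^2T_1^2)^{1/2+\epsilon}$ (paper's Lemma 2.2). These are Lindel\"of-on-average in the $t$-aspect and save a factor of $T_1$ over pointwise convexity: each dyadic block then contributes $\frac{1}{T_1}(Nk^2T_1^2)^{1/4+\epsilon}(Nk^2|D|^2T_1^2)^{1/4+\epsilon}\ll(Nk^2|D|)^{1/2+\epsilon}T_1^{\epsilon}$, so the whole critical-line integral costs $(Nk^2|D|X)^{1/2+\epsilon}$ essentially independently of $T$, and the choice $Z=X^{1/2+\epsilon}$, $T=X^{1+\epsilon}/Z$ closes the proof. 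Replacing your pointwise-convexity step by this Cauchy--Schwarz/second-moment argument repairs the proposal; the rest of your outline then goes through as in the paper.
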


\begin{rmk}
If the class  number  $h(D) >1$, the same estimate (obtained in \thmref{PropUpper}) holds for the sum 
\begin{equation}\label{Sum-PIBQF}
\begin{split}
S(f, D; X )  &:=  \sideset{}{^{\flat }}\sum_{\substack{n= \mathcal{Q}(\underline{x}) \le X \\  ~\mathcal{Q} \in  \mathcal{S}_{D}, \underline{x} \in \mathbb{Z}^{2} \\ \gcd(n,N) =1 } } \lambda_{f}(n),
\end{split}
\end{equation}
where $\mathcal{S}_{D}$ denotes the set of in-equivalent primitive integral positive-definite binary quadratic (reduced) forms of fixed discriminant with the class number $h(D) = \#\mathcal{S}_{D}$.
\end{rmk}

A consequence of the above theorem is a result on the sign change of the Fourier coefficients. Below, we give a brief description of the sign change problem and state our second result. It is well-known that if the Fourier coefficients of a cusp form are real, it changes its sign infinitely often. The sequence of Fourier coefficients of Hecke eigenform always has a sign change in the interval of length $X,$  for sufficiently large $X.$ The problem of getting first ever sign change of $\lambda_{f}(n), $ is quite hard.  As mentioned in \cite{Koh-Sen}, it is a $GL_{2}$-analogue of the $GL_{1}$ problem which concerns about finding least quadratic non-residue.  The problem is quite hard in the sense that $\lambda_{f}(n)$ may take infinitely many values and it can be very small. We also determine the first sign change of the real sequence of Hecke eigenvalues over the integers which are represented by some function $\mathcal{U}$. In particular, if   $\mathcal{U}$ is an identity function on $\mathbb{N}$ or characteristic function on the set of primes, the problem of first sign changes and counting the number of sign changes in short interval $[X, X+h],$ for sufficiently large $X$ and some $h<X,$ has been considered by many authors. 
Let $\mathcal{I}$ denote the identity function on $\mathbb{N}$  and $n_{f, \mathcal{I}}$ denotes the first sign change of Fourier coefficients $\lambda_{f}(n)$ of Hecke eignforms $f \in S_{k}(\Gamma_{0}(N)),$ i.e., the least integer among all $n \in\mathbb{N}$ such that $\lambda_{f}(n)<0$ and $\gcd(n, N) =1$. 
The problem of bounding  $n_{f,\mathcal{I}}$ in terms of analytic conductor has attracted many mathematicians. Kohnen and Sengupta \cite{Koh-Sen} have proved that the bound of $n_{f,\mathcal{I}}$ is given by 
$$
n_{f,\mathcal{I}} \ll kN(\log k)^{A} \exp\left( c_{2} \sqrt{\frac{\log(N+1)}{\log\log(N+2)}} \right),
$$
for some constant $A, c_{2}>0$, using analytic properties of automorphic $L$-functions.  Jointly with Iwaniec \cite{Iwa-Koh-Sen}, they improved the bound for $n_{f, \mathcal{I}}$ and obtained that $n_{f,\mathcal{I}} \ll Q^{29/60},$ where $Q$ is the analytic conductor, of the $L$-function associated to $f$, given explicitly in terms of weight and level of the form $f.$ Kowalski et al \cite{KLSW2010} has made an important observation that the Hecke relation \eqref{HeckeR} allows one to replace $\lambda_{f}(n)$ by a step function when one wants to find the lower bound of partial sum of $\lambda_{f}(n).$ This allowed to improve the exponent, i.e., $n_{f,\mathcal{I}} \ll Q^{9/20}.$  In the article  \cite{KLSW2010}, they mentioned that the exponent $9/20$ is not the limit of the method. This was further improved by Matom$\ddot{\rm a}$ki \cite{KMatomaki} to $3/8$ by deep analysis of summatory function of $\lambda_{f}(n)$ over square-free integers and this is best known bound for $n_{f, \mathcal{I}}$. The current known bound for $n_{f,\mathcal{I}}$  is still far from the truth as it is known that $n_{f, \mathcal{I}} \ll (\log kN)^{2}$ under generalised Riemann hypothesis.
The problem of getting the first-ever sign change becomes interesting if the sequence of integers is indexed by some function other than the identity function on $\mathbb{N}$. In particular, a result of this sort has been obtained by the first author in joint work with P. Tiwari \cite{Pra-Man}, where the sequence of integers is written as a sum of squares, i.e. by considering the function $Q: \mathbb{Z}^2 \rightarrow \mathbb{N}$ given by $Q({x, y}) = x^{2}+y^{2}. $ Following the argument of Kohnen and Sengupta \cite{Koh-Sen}, Pandey and Tiwari \cite{Pra-Man} have proved that 
$$n_{f, Q} \le k^{8}N^{4} \log^{64}(kN) ~ \max \left( \!\! N^{4}  \exp\left( \!\! c{\frac{\log(N+1)}{\log\log(N+2)}}  \right)(\log\log(N+2))^{4} \psi^{4}_{k}(N), \log^{120}(kN) \!\! \right),$$
where $\psi_{k}(N) = \displaystyle{\prod_{p \mid N} \frac{\log(kN)}{\log p}}.$

The method of the first author \cite{Pra-Man} can be generalised to any primitive integral positive-definite binary quadratic forms of fixed discriminant with the class number $1.$ In this work adopting the method of Kowalski et al. and Matomaki, we improve the previous result of the first author \cite{Pra-Man} and generalise the result to any primitive integral positive-definite binary quadratic form of fixed discriminant with the class number $1$.

Now, we state our second result concerning the first-ever sign change of Hecke eigenvalues of a normalised Hecke eigenform at the integers represented by $\mathcal{Q}(\underline{x})$. Let $n_{f, \mathcal{Q}}$ be the least integer among all $n \in \mathbb{N}$ such that  $\lambda_{f}(n) < 0,$ with $\gcd(n, N)=1$ and $n$ is represented by $\mathcal{Q}(\underline{x})$.
With these notations, we observe that $n_{f,\mathcal{I}} \neq n_{f, \mathcal{Q}}$ as it is not necessary that $n_{f,\mathcal{I}}$ can always be represented by $\mathcal{Q}(\underline{x})$. Moreover, we observe that $n_{f,\mathcal{I}} \le n_{f, \mathcal{Q}}.$ Hence, an upper bound estimate for $ n_{f, \mathcal{Q}}$ will also work for $n_{f,\mathcal{I}}$.  An estimate for $n_{f, \mathcal{Q}}$ is given by the following result.

\begin{thm}\label{ExtMatKLSW}
Let $k \ge 2$ be an integer and $N$ be a square-free positive integer. Let \linebreak $f \in S_{k}(\Gamma_{0}(N))$ be a Hecke eigenform and $\mathcal{Q}$ be a primitive integral positive-definite binary quadratic form of fixed discriminant $D$ with the class number $1.$. Then, for any arbitrarily small $\epsilon>0$, 
$$
n_{f, \mathcal{Q}} \ll_{\epsilon} (Nk^{2} |D|^{2})^{\frac{3}{4} + \epsilon}
$$
where the implied constant is absolute.
\end{thm}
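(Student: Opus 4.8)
The plan is to argue by contradiction, exploiting that a first-moment cancellation estimate like \thmref{PropUpper} is incompatible with a long initial stretch of nonnegative Hecke eigenvalues. Fix $X \asymp (Nk^{2}|D|^{2})^{3/4+\epsilon}$ and suppose, toward a contradiction, that $\lambda_{f}(n)\ge 0$ for every square-free $n\le X$ with $\gcd(n,N)=1$ that is represented by $\mathcal{Q}$; this is exactly the statement $n_{f,\mathcal{Q}}>X$. The goal is to show that such positivity forces $S(f,\mathcal{Q};X)$ to grow essentially linearly in $X$, which \thmref{PropUpper} forbids once $X$ is this large.

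For the lower bound I would first record the crucial consequence of Deligne's bound \eqref{lambda-coefficient-bound}: whenever $\lambda_{f}(n)\ge 0$ one has $\lambda_{f}(n)\ge \lambda_{f}(n)^{2}/\tau(n)\gg_{\epsilon} n^{-\epsilon}\lambda_{f}(n)^{2}$, and for a prime $p$ simply $\lambda_{f}(p)\ge \tfrac12\lambda_{f}(p)^{2}$ since $|\lambda_{f}(p)|\le 2$. Under the positivity hypothesis every term of $S(f,\mathcal{Q};X)$ is nonnegative, so discarding the composite $n$ and using that (because $h(D)=1$) a prime $p\nmid D$ is represented by $\mathcal{Q}$ exactly when $\chi_{D}(p)=1$, where $\chi_{D}=\bigl(\tfrac{D}{\cdot}\bigr)$ is the Kronecker symbol, gives
\begin{equation*}
S(f,\mathcal{Q};X)\ \ge\ \sum_{\substack{p\le X,\ \gcd(p,N)=1\\ p\ \text{represented by}\ \mathcal{Q}}}\lambda_{f}(p)\ \ge\ \tfrac12\sum_{\substack{p\le X,\ \gcd(p,ND)=1\\ \chi_{D}(p)=1}}\lambda_{f}(p)^{2}.
\end{equation*}
Everything thus reduces to a lower bound for a symmetric-square-weighted prime sum over the split residue classes.

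To evaluate it I would use $\lambda_{f}(p)^{2}=1+\lambda_{f}(p^{2})=1+\lambda_{\mathrm{sym}^{2}f}(p)$ together with the detector $\tfrac12(1+\chi_{D}(p))$, so that the sum unfolds into four prime sums governed by $\zeta(s)$, $L(s,\chi_{D})$, $L(s,\mathrm{sym}^{2}f)$ and $L(s,\mathrm{sym}^{2}f\otimes\chi_{D})$. Only $\zeta$ has a pole at $s=1$, supplying the main term $\tfrac12\cdot\tfrac{X}{\log X}$; a contour shift (or a Selberg--Delange/Perron treatment of the associated square-free second moment) then yields
\begin{equation*}
\sum_{\substack{p\le X,\ \gcd(p,ND)=1\\ \chi_{D}(p)=1}}\lambda_{f}(p)^{2}\ \gg_{\epsilon}\ (kN|D|)^{-\epsilon}\,\frac{X}{\log X},
\end{equation*}
valid provided $X$ exceeds a fixed power of the conductors of these four $L$-functions. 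Uniform positivity of the main-term constant follows from $L(1,\mathrm{sym}^{2}f)\gg (kN)^{-\epsilon}$ (Hoffstein--Lockhart) and $L(1,\chi_{D})\gg_{\epsilon}|D|^{-\epsilon}$ (Siegel), the latter entering the implied constant ineffectively but harmlessly at the level of $\epsilon$-powers.

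Feeding $S(f,\mathcal{Q};X)\gg X^{1-\epsilon}$ into the upper bound $S(f,\mathcal{Q};X)\ll_{\epsilon}(Nk^{2}|D|)^{1/2+\epsilon}X^{1/2+\epsilon}$ of \thmref{PropUpper} produces an inequality of the shape $X^{1/2-\epsilon}\ll (Nk^{2}|D|)^{1/2+\epsilon}\cdot(\text{conductor factors})$, which fails once $X$ is beyond the threshold at which the second-moment estimate first becomes admissible; optimizing that threshold against the loss in \thmref{PropUpper} is what pins the exponent at $(Nk^{2}|D|^{2})^{3/4+\epsilon}$, and the contradiction then proves the theorem. Since $n_{f,\mathcal{I}}\le n_{f,\mathcal{Q}}$, the same bound holds in the identity case. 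The delicate point --- and the main obstacle --- is the \emph{effective, conductor-uniform} version of the displayed second-moment lower bound: one must make the $\zeta$-pole dominate the contribution of $L(s,\mathrm{sym}^{2}f\otimes\chi_{D})$, whose arithmetic conductor grows like $|D|^{3}$ under the twist, with only a \emph{polynomial} threshold in $N,k,|D|$. Achieving this through convexity/large-sieve input on the critical line, rather than the classical zero-free region (which would yield only the $\exp(\sqrt{\log})$ savings of \cite{Koh-Sen,Pra-Man}), is precisely the Kowalski-et-al./Matom\"aki mechanism \cite{KLSW2010,KMatomaki}, and it is this balance that generates both the power $3/4$ and the factor $|D|^{2}$; the finitely many ramified primes $p\mid ND$, and the primes $p\mid D$ in the representation condition, are handled by a routine separate argument.
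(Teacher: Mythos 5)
Your skeleton (positivity of $\lambda_{f}$ on represented integers up to the first sign change forces a large positive value of $S(f,\mathcal{Q};\cdot)$, which collides with the upper bound of \thmref{PropUpper}) is the same as the paper's, and your reduction to a prime sum is valid as far as it goes: under the positivity hypothesis all terms are nonnegative, and $\lambda_{f}(p)\ge\tfrac12\lambda_{f}(p)^{2}$ when $\lambda_{f}(p)\ge 0$. The fatal gap is the displayed lower bound $\sum_{p\le X,\,\chi_{D}(p)=1}\lambda_{f}(p)^{2}\gg_{\epsilon}(kN|D|)^{-\epsilon}X/\log X$ claimed to hold once $X$ exceeds a \emph{fixed power} of the conductors. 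After writing $\lambda_{f}(p)^{2}=1+\lambda_{\mathrm{sym}^{2}f}(p)$, the error term in the prime number theorem for $L(s,\mathrm{sym}^{2}f)$ and $L(s,\mathrm{sym}^{2}f\otimes\chi_{D})$ coming from the classical zero-free region is of size $X\exp(-c\log X/\log\mathcal{C})$, where $\mathcal{C}$ is the analytic conductor (Hoffstein--Lockhart removes exceptional zeros but does not widen the region); when $X=\mathcal{C}^{O(1)}$ this error is $\asymp X$, so the pole of $\zeta$ does not dominate and no unconditional estimate of the kind you assert is known. This is precisely why the prime-sum arguments of \cite{Koh-Sen} and \cite{Pra-Man} only reach bounds carrying $\exp(\sqrt{\log})$-type factors. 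Your closing claim that the missing conductor-uniform estimate is supplied by "convexity/large-sieve input" and that this "is precisely the Kowalski-et-al./Matom\"aki mechanism" misdescribes that mechanism: \cite{KLSW2010,KMatomaki} never estimate $\sum_{p}\lambda_{f}(p)^{2}$ at all. They use the Hecke relations to replace $\lambda_{f}$ from below by the step function $h_{Y}$ of \eqref{Auxfun}, so that the lower bound becomes a mean value of $\eta^{\omega(n)}$-type multiplicative functions whose Dirichlet series is $\zeta(s)^{\eta}L(s,\chi_{D})^{\eta}P(s)$ --- no symmetric-square $L$-function enters, and that is what makes a polynomial threshold attainable. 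This is exactly what the paper does, via the convolution argument ($g_{Q}\ge 0$) in Section 3 together with \propref{MVEBQF} and \propref{PropLower}; your proposal leaves this, the actual core of the proof, unfilled.

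Separately, even if your prime-sum estimate were granted, your comparison does not yield the stated exponent. From $X^{1-\epsilon}\ll(Nk^{2}|D|)^{1/2+\epsilon}X^{1/2+\epsilon}$ one gets only $X\ll(Nk^{2}|D|)^{1+\epsilon}$, which is weaker than $(Nk^{2}|D|^{2})^{3/4+\epsilon}$ in the main parameters $N,k$ (exponent $1$ versus $3/4$). In the paper the $3/4$ has a specific source: positivity up to $Y=n_{f,\mathcal{Q}}$ forces, via $h_{Y}$ and Matom\"aki's $\sigma(4/3)>0$ (quoted in \eqref{PBound}), positivity of the weighted sum over the \emph{longer} range $Y^{4/3}$; feeding $X=Y^{4/3}$ into \thmref{PropUpper} gives $Y^{2/3}\ll(Nk^{2}|D|)^{1/2+\epsilon}$, i.e. $Y\ll(Nk^{2}|D|)^{3/4+\epsilon}$, with the $|D|$-exponent then relaxed to $3/2$ to absorb the $L(1,\chi_{D})$ and $P(1)$ factors. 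Your prime-restricted argument has no such range extension --- positivity on $[1,X]$ controls the sum only up to $X$ --- so the assertion that your optimization "pins the exponent at $3/4$" has no mechanism behind it.
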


\begin{rmk} 
 For the primitive integral positive-definite binary quadratic form of the following fundamental discriminants $D = -3, -4, -8, -7, -11, -19, -43, -67, -163$ and non-fundamental discriminants $D =  -12, -16, -27, -28,$ the class number $h(D) = 1$. The list of reduced forms corresponding to these discriminants is given in  \cite[Page 19- 20]{Buell}. \thmref{ExtMatKLSW} holds only for primitive integral positive-definite binary quadratic form of such discriminants.
\end{rmk}

\begin{rmk} 
If the class number $h(D) >1$ for the discriminant $D<0$, then there are $h(D)$ many reduced forms corresponding to discriminants $D$. Moreover,  let $n_{f, D}$ be the least integer among all $n \in \mathbb{N}$ such that  $\lambda_{f}(n)< 0,$ with $\gcd(n, N)=1$ and $n$ is represented by some of them from $h(D)$ many reduced forms corresponding to discriminants $D$. Then, 
\begin{equation}
\begin{split}
n_{f, D} \ll_{\epsilon} (Nk^{2} |D|^{2})^{\frac{3}{4} + \epsilon} h(D)^{-3/2}.
\end{split}
\end{equation}
\end{rmk}

\smallskip
\textbf{Acknowledgement :} We would like to thank Prof. B. Ramakrishnan for encouraging us to work on this problem and for his guidance and suggestions while preparing the manuscript. We also thank R. Acharya for his helpful suggestions. The second author would like to thank IMSc, Chennai for its hospitality and financial support through an institute fellowship.

\section{Key Ingredients}
In order to obtain our first result, we need to study the following sum:
\begin{equation*}
\begin{split}
S(f, \mathcal{Q}; X ) &=  \sideset{}{^{\flat }}\sum_{n= \mathcal{Q}(\underline{x}) \le X \atop \gcd(n,N) =1  } \lambda_{f}(n). 
\end{split}
\end{equation*}
where symbol $\flat $ means that the sum runs over all square-free integers. The function $r_{\mathcal{Q}}(n)$ being always non-negative allows us to write the above
partial sums defined in terms of known arithmetical functions modulo certain weight, i.e.,
\begin{equation}
\begin{split}
S(f, \mathcal{Q}; X ) 
& =  \sideset{}{^{\flat }} \sum_{n \le X \atop \gcd(n,N) =1 } \lambda_{f}(n) r_{\mathcal{Q}}(n) = w_{D} \sideset{}{^{\flat }} \sum_{n \le X \atop \gcd(n,N) =1 } \lambda_{f}(n) r^{*}_{\mathcal{Q}}(n),
\end{split}
\end{equation}
 where $r_{\mathcal{Q}}(n)$ denotes the number of representations of $n$ by the quadratic form $\mathcal{Q}(\underline{x}) $, $w_{D}$ is size of group of units and $r_{\mathcal{Q}}(n) = w_{D} r^{*}_{\mathcal{Q}}(n)$. We define the generating function  $\theta_{\mathcal{Q}}(\tau)$ for $r_{\mathcal{Q}}(n)$  associated to binary quadratic  form $\mathcal{Q}(\underline{x})$ as follows:  
\begin{equation*}
\begin{split}
\theta_{\mathcal{Q}}(\tau) & := \displaystyle{\sum_{\underline{x} \in {\mathbb Z}^{2} } q^{\mathcal{Q}(\underline{x})}}  \quad = \quad  \displaystyle{\sum_{n = 0}^{\infty} r_{\mathcal{Q}}(n) q^{n}}, \qquad   \quad ~~~ q = e^{2 \pi i \tau}. \\
\end{split}
\end{equation*} 
The generating function $\theta_{\mathcal{Q}}(\tau) \in M_{1}(\Gamma_{0}(|D|), \chi_{D}) $ (see \cite[Theorem 10.9]{Iwaniec}), where $\chi_{D}$ is the Dirichlet character modulo $|D|$ which  is given by Jacobi symbol $\chi_{D}(d) := \left(\frac{D}{d} \right)$. It is well-known that  $r_{\mathcal{Q}}(n) \ll n^{\epsilon}$.  

We are considering the quadratic forms $\mathcal{Q}(\underline{x})$ of discriminant $D < 0$ such that the class number $h(D)$ is $1$ and there is a well-known formula for $r_{\mathcal{Q}}(n)$ given by \cite[section 11.2]{Iwaniec}:
\begin{equation} \label{r-quad}
r_{\mathcal{Q}}(n) = w_{D} \sum_{d \vert n} \chi_{D}(d), \   \
{\rm ~where~} \ \ \ \ \ 
w_{D} =
\begin{cases} 
6 {\rm ~~~~~ if~~~~~~} D = -3,\\
4 {\rm ~~~~~ if~~~~~~} D = -4,\\
2 {\rm ~~~~~ if~~~~~~} D < -4. \end{cases} 
\end{equation}
 The formula for $r_{\mathcal{Q}}(n)$ depends only on the  discriminant $D$ but not on the choice of $a, b$ and $c$ appearing in the definition of binary quadratic from $\mathcal{Q}(\underline{x}).$ For details, we refer to \cite[Section 2]{Lalit, LalitV}.

In order to obtain an upper bound for the sum $S(f, \mathcal{Q}; X),$ We consider the following Dirichlet series: 
\begin{equation}\label{R-L-function}
\begin{split}
L(f, \mathcal{Q}; s ) &=  \sideset{}{^{\flat }} \sum_{n \ge 1 \atop \gcd(n,N) =1 } \frac{\lambda_{f}(n) r^{*}_{\mathcal{Q}}(n)}{n^{s}}, 
\end{split}
\end{equation}
where $r^{*}_{\mathcal{Q}}(n) = \displaystyle{\sum_{d \vert n}}~ \chi_{D}(d).$ The Dirichlet series $L(f, \mathcal{Q}; s )$ converges absolutely and uniformly for $\Re(s) >1.$   To obtain an upper bound for the sum $S(f, \mathcal{Q}; X ),$ we first decompose $L(f, \mathcal{Q}; s )$ in terms of known Hecke $L$-functions. Before stating the decomposition of $L(f, \mathcal{Q}; s )$ in terms of well-known function, we define the $L$-functions associated to a Hecke eigenform.  The Hecke $L$-function  associated to a normalised Hecke eigenform $f(\tau) = \displaystyle{\sum_{n=1}^\infty \lambda_f(n)n^{\frac{k-1}{2}}q^{n}} \in S_{k}(\Gamma_{0}(N))$  is given by:  
\begin{equation}
\begin{split}
L(s, f) &= \sum_{n \ge 1} \frac{\lambda_{f}(n)}{n^{s}} = \prod_{p \mid N} \left(1-\frac{\lambda_{f}(p)}{p^s}\right)^{-1}  \prod_{p \nmid N}\left(1-\frac{\lambda_{f}(p)}{p^{s} }+ \frac{1}{p^{2s}}\right)^{-1}, \qquad \Re(s) >1.
\end{split}
\end{equation}
The completed Hecke $L$-function is defined as follows:
\begin{equation}
\begin{split}
\Lambda(s,f) := \left(\frac{\sqrt{N}}{2\pi}\right)^{s} \Gamma(s+k-1) ~~ L(s, f), 
\end{split}
\end{equation}
which is analytically continued to the whole complex plane and satisfies a nice functional equation $(s \rightarrow 1-s).$ 
For a given Dirichlet character $\chi$ of modulus $m,$ the twist of $f$ is defined as
$f \otimes \chi(\tau) := \displaystyle{\sum_{n=1}^\infty \lambda_f(n) \chi(n) n^{\frac{k-1}{2}}q^{n}} \in S_{k}(\Gamma_{0}(M), \chi^{2}),$ where $M \mid Nm^{2}.$ 
The twisted Hecke $L$- function associated to $f \otimes \chi$ is given as follows:
\begin{equation}
\begin{split}
L(s, f \otimes \chi ) &= \sum_{n \ge 1} \frac{\lambda_{f}(n) \chi(n)}{n^{s}} = \!\! \prod_{p \mid M} \!\!   \left(1-\frac{\lambda_{f}(p) \chi(p)}{p^s}\right)^{-1}  \!\!\! \prod_{p \nmid M}   \left(1-\frac{\lambda_{f}(p)\chi(p)}{p^{s} }+ \frac{1}{p^{2s}}\right)^{-1} \!\!. \\ 
\end{split}
\end{equation}
This also converges absolutely, uniformly and non-vanishing for $\Re(s) >1.$ The completed twisted Hecke $L$-function is defined as follows:
\begin{equation}
\begin{split}
\Lambda(s,f \otimes \chi) := \left(\frac{\sqrt{M}}{2\pi}\right)^{s} \Gamma(s+k-1)L(s, f\otimes \chi), 
\end{split}
\end{equation}
which also has analytic continuation to whole $\mathbb{C}$-plane and satisfies a nice functional equation $(s \rightarrow 1-s).$ For details, see \cite[Section 7.2]{Iwaniec}. The decomposition of  $L(f, \mathcal{Q}; s )$ is given by the following Lemma.
\begin{lem}\label{Decomp}
For $\Re(s) >1 $, we have 
\begin{equation} \label{U(s)}
\begin{split}
L(f, \mathcal{Q}; s ) & = L(s, f) L(s, f \otimes {\chi_{D}}) G(s), \\ 
\end{split}
\end{equation}
where $L(s, f)$ and  $L(s, f \otimes {\chi_{D}})$ are Hecke $L$-function and twisted Hecke $L$-function, respectively and the Euler product for $G(s)$ is given by 
\begin{equation*}
\begin{split}
 &  \prod_{p \mid N}\left( 1- \frac{ \lambda_{f}(p)}{p^{s}}\right)  \prod_{p \mid N}\left( 1- \frac{ \lambda_{f}(p) \chi_{D}(p)}{p^{s}}\right)  \prod_{p \nmid N \atop  p \mid |D|}\left( 1- \frac{ \lambda_{f}(p) \chi_{D}(p)}{p^{s}} + \frac{1}{p^{2s}}\right)^{-1}
\\
 &   \prod_{p \nmid N}  \left( 
 \begin{cases}
  1+ \frac{2 - 2 \lambda^{2}_{f}(p)- \lambda^{2}_{f}(p) \chi_{D}(P)  }{p^{2s} } + \frac{ \lambda_{f}(p)(1+\chi_{D}(p)) (1+ \lambda^{2}_{f}(p) \chi_{D}(p)) }{p^{3s} } \\
 \qquad  \qquad  +  \frac{ 1- 2\lambda^{2}_{f}(p)(1+ \chi_{D}(P)) }{p^{4s} } + \frac{\lambda_{f}(p)(1+ \chi_{D}(P))}{p^{5s}}
 \end{cases}
 \right).
\end{split}
\end{equation*}
 It converges absolutely and uniformly in the half plane $\Re(s) > \frac{1}{2}$ and non-zero for $\Re(s) = 1$.
\end{lem}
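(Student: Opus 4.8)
The identity \eqref{U(s)} should be read as the definition $G(s):=L(f,\mathcal{Q};s)\big/\bigl(L(s,f)\,L(s,f\otimes\chi_D)\bigr)$, so the real content of \lemref{Decomp} is the explicit Euler product of $G(s)$ together with its half-plane of absolute convergence and its non-vanishing on $\Re(s)=1$. My plan is to verify the factorisation by matching Euler factors prime by prime. The first step is to expand $L(f,\mathcal{Q};s)$ itself: since its defining series runs over squarefree $n$ coprime to $N$ and both $\lambda_f$ and $r^{*}_{\mathcal{Q}}$ are multiplicative, while $r^{*}_{\mathcal{Q}}(p)=1+\chi_D(p)$ and squarefreeness discards every higher prime power, one obtains
\begin{equation*}
L(f,\mathcal{Q};s)=\prod_{p\nmid N}\Bigl(1+\frac{\lambda_f(p)\bigl(1+\chi_D(p)\bigr)}{p^{s}}\Bigr).
\end{equation*}

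Writing $x=p^{-s}$, at a good prime $p$ (that is, $p\nmid N$ and $p\nmid|D|$, so that $\chi_D(p)^2=1$) the local factor of $G(s)$ is
\begin{equation*}
G_p(s)=\bigl(1+\lambda_f(p)(1+\chi_D(p))x\bigr)\bigl(1-\lambda_f(p)x+x^2\bigr)\bigl(1-\lambda_f(p)\chi_D(p)x+x^2\bigr),
\end{equation*}
obtained by multiplying the local factor of $L(f,\mathcal{Q};s)$ by the reciprocals of the two degree-two Hecke factors. The decisive observation is that the coefficient of $x$ in the product of the two quadratics is $-\lambda_f(p)(1+\chi_D(p))$, which exactly cancels the linear term of the first bracket; hence $G_p(s)=1+O(x^2)$, and a direct expansion (using $\chi_D(p)^2=1$) reproduces precisely the degree-five polynomial in $p^{-s}$ displayed in the statement. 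This vanishing of the $p^{-s}$-coefficient is the whole point of the decomposition: it is what pushes the abscissa of absolute convergence of $G(s)$ from $\Re(s)=1$ down to $\Re(s)=\tfrac12$, and it is the property on which the proof of \thmref{PropUpper} ultimately rests.

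It remains to handle the finitely many ramified primes and the analytic assertions. At $p\mid N$ the coprimality condition forces the local factor of $L(f,\mathcal{Q};s)$ to be $1$, while the Hecke factors of $L(s,f)$ and $L(s,f\otimes\chi_D)$ drop to degree one; dividing leaves the two finite products $\prod_{p\mid N}(1-\lambda_f(p)p^{-s})$ and $\prod_{p\mid N}(1-\lambda_f(p)\chi_D(p)p^{-s})$. At $p\nmid N$ with $p\mid|D|$ one has $\chi_D(p)=0$, the twist degenerates, and the discrepancy between the true and the generic local factors is collected into $\prod_{p\nmid N,\,p\mid|D|}(1-\lambda_f(p)\chi_D(p)p^{-s}+p^{-2s})^{-1}$. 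For absolute and uniform convergence on each half-plane $\Re(s)\ge\tfrac12+\delta$ I invoke Deligne's bound \eqref{lambda-coefficient-bound}, $|\lambda_f(p)|\le2$, which makes every good local factor $1+O(p^{-2\Re(s)})$, so $\sum_p|G_p(s)-1|$ converges and the three finite products are harmless. Finally, on $\Re(s)=1$ — which lies strictly inside the region of absolute convergence — the product is non-zero as soon as no local factor vanishes there: for all but finitely many $p$ this follows from $|G_p(s)-1|<1$, and the remaining finitely many factors (including those at $p\mid N|D|$, where for squarefree $N$ one has $|\lambda_f(p)|\le p^{-1/2}$) are checked directly to have no zero on the line.

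I expect the main obstacle to be the purely computational core of the second paragraph: carrying out the triple product at the good primes and confirming that the quintic in $p^{-s}$ is reproduced exactly, together with the careful matching of the degenerate local factors at the primes dividing $N$ and $|D|$. None of this is conceptually hard, but it is where an error would most easily slip in, and it is the step that certifies the crucial convergence for $\Re(s)>\tfrac12$.
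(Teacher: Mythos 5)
Your proposal is correct and takes essentially the same route as the paper: expand $L(f, \mathcal{Q}; s)$ via multiplicativity into the Euler product $\prod_{p \nmid N}\bigl(1+ \lambda_{f}(p)(1+\chi_{D}(p))p^{-s}\bigr)$, then peel off the local factors of $L(s,f)$ and $L(s, f\otimes \chi_{D})$ and identify $G(s)$ prime by prime. The paper's own proof is just a compressed one-line version of this computation (it writes the Euler product and asserts the factorisation), so your explicit local matching at good and ramified primes, the cancellation of the $p^{-s}$-coefficient, and the convergence and non-vanishing discussion merely supply details the paper leaves implicit.
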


\begin{proof}
We know that $\lambda_{f}(n)$ and  $r^{*}_{\mathcal{Q}}(n)$ are multiplicative function. So, for $\Re(s) >1,$ we have 
\begin{equation*}
\begin{split}
L(f, \mathcal{Q}; s ) &=  \!\!\!\! \sideset{}{^{\flat }} \sum_{n \ge 1 \atop \gcd(n,N) =1 } \frac{\lambda_{f}(n) r^{*}_{\mathcal{Q}}(n)}{n^{s}} = \prod_{p \nmid N}\left( 1+ \frac{\mu^{2}(p) \lambda_{f}(p) r^{*}_{\mathcal{Q}}(p) }{p^{s}}\right) \\
& =  \prod_{p \nmid N}\left( 1+ \frac{ \lambda_{f}(p) (1+ \chi_{D}(p)) }{p^{s}}\right)= L(s, f) L(s, f \otimes \chi_{D} )G(s), \\
\end{split}
\end{equation*}
where the Euler product for $G(s)$ is given in lemma.
\end{proof}

\begin{lem} \cite[Chapter 5]{HIwaniec}
Let $\epsilon>0$ be an arbitrarily small real number. The hybrid convexity bound of Hecke  $L$-function and twisted Hecke $L$-function is given by:
\begin{equation}
\begin{split}
L(s, f) &\ll_{\epsilon}  \left( N k^{2} (|s|+3)^{2}\right)^{\frac{1}{4}+\epsilon} ~ \\ 
{\rm and} \quad L(s, f \otimes {\chi_{D}}) & \ll_{\epsilon} \left( N  k^{2}  |D|^{2} (|s|+3)^{2}\right)^{\frac{1}{4}+\epsilon} ~ \\ 
\end{split}
\end{equation}
on the line $\Re(s) = \frac{1}{2}+\epsilon.$ The second integral moment is given by 
\begin{equation}
\begin{split}
\int_{T}^{2T} \left|L \left(\frac{1}{2} +\epsilon+it, f\right)\right|^{2} dt & \ll_{\epsilon} (Nk^{2}T^{2})^{\frac{1}{2} +\epsilon}\\
\int_{T}^{2T} \left|L \left(\frac{1}{2} +\epsilon+it, f \otimes \chi_{D}\right)\right|^{2} dt & \ll_{\epsilon} (Nk^{2} |D|^{2}T^{2})^{\frac{1}{2} +\epsilon}
\end{split}
\end{equation}
uniformly for any $|T| \ge 1$. 
\end{lem}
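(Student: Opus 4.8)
The plan is to establish the convexity bounds and the mean-square bounds in parallel for $L(s,f)$ and $L(s,f\otimes\chi_D)$, since these two $L$-functions differ only through their analytic conductors. Twisting by $\chi_D$ (of modulus $|D|$) moves $f$ into $S_k(\Gamma_0(M),\chi_D^2)$ with $M\mid N|D|^2$, as recorded above, so the arithmetic conductor grows by at most a factor $|D|^2$, while the weight $k$ and hence the archimedean factor $\Gamma(s+k-1)$ are unchanged. Accordingly I set $\mathcal{C}(s):=Nk^2(|s|+3)^2$ and $\mathcal{C}_D(s):=Nk^2|D|^2(|s|+3)^2$, quantities that dominate the respective analytic conductors; every estimate below is phrased in terms of these two, and the untwisted and twisted assertions of the lemma are the corresponding specializations.

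For the convexity bounds I would run a Phragm\'en--Lindel\"of interpolation across the strip $-\epsilon\le\Re(s)\le 1+\epsilon$. On the right edge $\Re(s)=1+\epsilon$, absolute convergence of the Euler product together with Deligne's bound $|\lambda_f(n)|\le\tau(n)$ gives $L(s,f),\,L(s,f\otimes\chi_D)\ll_\epsilon\zeta(1+\epsilon)^2\ll_\epsilon 1$. On the left edge $\Re(s)=-\epsilon$, I would invoke the functional equations of $\Lambda(s,f)$ and $\Lambda(s,f\otimes\chi_D)$ (stated above) to write $L(s,\cdot)$ as $L(1-s,\cdot)$ times a ratio of completed factors; bounding $L(1-s,\cdot)$ by absolute convergence (since $\Re(1-s)=1+\epsilon$) and the gamma ratio by Stirling's formula produces a bound of the shape $\mathcal{C}(s)^{1/2+\epsilon}$. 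Since $L(s,f)$ and $L(s,f\otimes\chi_D)$ are entire of finite order, the Phragm\'en--Lindel\"of principle interpolates the exponents $0$ and $\tfrac12$ linearly, giving $\mathcal{C}(s)^{1/4+\epsilon}$ on the central line $\Re(s)=\tfrac12+\epsilon$; these are exactly the two asserted hybrid convexity bounds.

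For the second moments I would begin from an approximate functional equation: for $s=\tfrac12+\epsilon+it$,
\[
L(s,f)=\sum_{n\ge1}\frac{\lambda_f(n)}{n^{s}}\,V\!\left(\frac{n}{\sqrt{\mathcal{C}(s)}}\right)+(\text{dual sum}),
\]
with a smooth, rapidly decaying weight $V$, so that the effective length is $M\asymp\sqrt{\mathcal{C}(s)}\asymp(Nk^2)^{1/2}T$ for $|t|\asymp T$; this is the standard Mellin-transform consequence of the functional equation. Squaring, integrating over $t\in[T,2T]$, and applying the mean value theorem for Dirichlet polynomials
\[
\int_{T}^{2T}\Bigl|\sum_{n\le M}a_n\,n^{-it}\Bigr|^2\,dt\ll(T+M)\sum_{n\le M}|a_n|^2,
\]
with $a_n=\lambda_f(n)\,n^{-1/2-\epsilon}$ and the Rankin--Selberg input $\sum_{n\le M}|\lambda_f(n)|^2 n^{-1-2\epsilon}\ll_\epsilon M^\epsilon$, gives $\int_T^{2T}|L(s,f)|^2\,dt\ll(Nk^2T^2)^{1/2+\epsilon}$, because $T+M\asymp(Nk^2T^2)^{1/2}$. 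The twisted case is identical with $M\asymp(Nk^2|D|^2)^{1/2}T$, which produces the extra factor $|D|$ and hence the bound $(Nk^2|D|^2T^2)^{1/2+\epsilon}$.

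The genuine difficulty is not any single step but the demand for complete uniformity: the bounds must be hybrid in $N$, $k$, $|D|$ and the height $t$ at once, with every implied constant absolute so that only the $\epsilon$-dependence remains. The two delicate points are (i) pinning down the analytic conductors correctly---in particular how the $\chi_D$-twist inflates the conductor by $|D|^2$ and how the weight enters through $\Gamma(s+k-1)$---and carrying out the Stirling estimate of the gamma ratio uniformly in both $k$ and $t$; and (ii) controlling the error term of the approximate functional equation uniformly in all parameters. The Dirichlet-polynomial mean value step and the Rankin--Selberg coefficient bound are robust and add only bookkeeping. As these are precisely the hybrid estimates assembled in \cite[Chapter 5]{HIwaniec}, the proof ultimately amounts to specializing that general framework to $f$ and $f\otimes\chi_D$ and reading off the dependence on the conductor.
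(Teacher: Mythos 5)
The paper offers no proof of this lemma at all---it is quoted verbatim from Iwaniec--Kowalski \cite[Chapter 5]{HIwaniec}---and your sketch is precisely the standard argument behind that citation (Phragm\'en--Lindel\"of interpolation between $\Re(s)=1+\epsilon$ and $\Re(s)=-\epsilon$ for the hybrid convexity bounds, and approximate functional equation of length $\asymp\sqrt{\text{conductor}}$ plus the Montgomery--Vaughan mean value theorem for the second moments), with the analytic conductors $Nk^{2}(|s|+3)^{2}$ and $Nk^{2}|D|^{2}(|s|+3)^{2}$ identified correctly, so it is correct in outline and consistent with the paper's source. One small point worth tightening: for the coefficient sum $\sum_{n\le M}|\lambda_f(n)|^{2}n^{-1-2\epsilon}\ll_\epsilon M^{\epsilon}$ it is cleaner to use Deligne's bound $|\lambda_f(n)|\le\tau(n)$ from \eqref{lambda-coefficient-bound}, which is absolutely uniform in $N$ and $k$, rather than a Rankin--Selberg input, whose implied constant involves $L(1,\mathrm{sym}^{2}f)$ and would itself require a uniform estimate.
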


\subsection{ \textbf{Mean value of a multiplicative function over the integers represented by a reduced form of discriminant $D$ with $h(D) =1$}}

Let $\eta$ be a fixed positive integer. We define the following sum given by 
\begin{equation*}
\begin{split}
\mathcal{E}_{\eta}( X ) &=  \sideset{}{^{\flat }}\sum_{n= \mathcal{Q}(\underline{x}) \le X \atop \gcd(n,N) =1  } \eta^{\omega(n)}, 
\end{split}
\end{equation*}   
where $\omega(n)$ denotes the number of distinct prime divisors of a positive integer $n$. The above sum can be expressed in terms of known arithmetical function given by 
\begin{equation}\label{MFE}
\begin{split}
\mathcal{E}_{\eta}( X ) &=  \sideset{}{^{\flat }}\sum_{n= \mathcal{Q}(\underline{x}) \le X \atop \gcd(n,N) =1  } \eta^{\omega(n)} = \sum_{n \le X \atop \gcd(n,N) =1  } \mu^{2}(n) \eta^{\omega(n)} r^{*}_{\mathcal{Q}}(n). \\
\end{split}
\end{equation}  
In order to obtain an estimate for $\mathcal{E}_{\eta}( X )$ we first decompose the associated Dirichlet series (say $D(s)$) in terms of known $L$-functions and then apply the Perron's formula and use the standard argument. The associated Dirichlet series $D(s)$ is given by 
\begin{equation*}
\begin{split}
D(s) &=  \sum_{n \ge 1 \atop \gcd(n,N) =1 } \frac{\mu^{2}(n) \eta^{\omega(n)} r^{*}_{\mathcal{Q}}(n)}{n^{s}}. 
\end{split}
\end{equation*}
The Dirichlet series $D(s)$ converges for $\Re(s) > 1$.  Following the argument as in the proof of  \lemref{Decomp}, one has the decomposition of $D(s)$ as follows.
\begin{equation}\label{DSLF}
\begin{split}
D(s) &= \prod_{p \nmid N }\left(1+ \frac{ \mu^{2}(p)\eta^{\omega(p)} r^{*}_{\mathcal{Q}}(p)}{p^{s}}\right) = \prod_{p \nmid N }\left(1+ \frac{\eta (1+\chi_{D}(p))}{p^{s}}\right) = \zeta^{\eta}(s) L^{\eta}(s, \chi_{D})P(s), \\
\end{split}
\end{equation} 
where $\zeta(s)$ is the Riemann zeta function, and $ L(s, \chi_{D})$ is the Dirichlet $L$- function associated to the quadratic character $\chi_{D}$ and $P(s)$ is given in terms of Euler product as follows:
\begin{equation}\label{PSS}
\begin{split}
P(s)  & =  \prod_{p \mid N} \left[\left( 1- \frac{1}{p^{s}}\right) \left( 1- \frac{\chi_{D}(p)}{p^{s}}\right)\right]^{\eta}  \prod_{p \nmid N} \left [ \left( 1- \frac{1}{p^{s}}\right)^{\eta}\left( 1- \frac{\chi_{D}(p)}{p^{s}}\right)^{\eta}  
\left(1+ \frac{\eta (1+\chi_{D}(p))}{p^{s}}\right) \right].\\   
\\
\end{split}
\end{equation}
The last product converges absolutely and uniformly for $\Re(s) > 1/2$ and non-zero at $s=1.$ Using the Perron's formula, an estimate for $\mathcal{E}_{\eta}( X )$ is given in the following proposition.
\begin{prop}  \label{MVEBQF}
Let $\mathcal{E}_{\eta}( X )$ be as in \eqref{MFE}. Then there exist an absolute constant $C = C(\eta)$ such that 
\begin{equation}\label{MFEST}
\begin{split}
\mathcal{E}_{\eta}( X )  
&= \frac{P(1) L(1, \chi_{D})^{\eta}}{\Gamma(\eta)} X (\log X)^{\eta-1} \left(  1+ O_{\eta} \left( \frac{L_{N}^{2 e \eta +2}}{\sqrt{\log X}} \right)\right)
\end{split}
\end{equation}  
 uniformly for $N \ge 1$ and $X \ge \exp (C L_{N}^{2 e \eta +2} )$ where $ L_{N} = \log (\omega(N)+3).$
\end{prop}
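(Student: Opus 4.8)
The plan is to run the Selberg--Delange / Perron method on the Dirichlet series $D(s)=\zeta^{\eta}(s)\,L^{\eta}(s,\chi_D)\,P(s)$ obtained in \eqref{DSLF}. The decisive structural point is that $\eta$ is a fixed positive integer while $L(s,\chi_D)$ and $P(s)$ are holomorphic and non-vanishing at $s=1$ (Dirichlet gives $L(1,\chi_D)\neq0$, and $P(s)$ is non-zero there by \eqref{PSS}); hence in a neighbourhood of $s=1$ the function $D(s)X^{s}/s$ has a single singularity, a pole of order exactly $\eta$ coming from $\zeta^{\eta}(s)$. Writing $\zeta^{\eta}(s)=(s-1)^{-\eta}\bigl((s-1)\zeta(s)\bigr)^{\eta}$ and setting $\mathcal{G}(s):=\bigl((s-1)\zeta(s)\bigr)^{\eta}L^{\eta}(s,\chi_D)P(s)$, which is holomorphic at $s=1$ with $\mathcal{G}(1)=L(1,\chi_D)^{\eta}P(1)$, the residue of $(s-1)^{-\eta}\mathcal{G}(s)X^{s}/s$ at $s=1$ is a polynomial in $\log X$ of degree $\eta-1$ with leading coefficient $\mathcal{G}(1)/\Gamma(\eta)=L(1,\chi_D)^{\eta}P(1)/\Gamma(\eta)$. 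This is precisely the stated main term; the lower-order terms of the polynomial (degree $\le\eta-2$) will be absorbed into the error.

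First I would apply the truncated Perron formula on the line $\Re(s)=c:=1+1/\log X$, using $r^{*}_{\mathcal{Q}}(n)\ll_{\epsilon}n^{\epsilon}$ and $\eta^{\omega(n)}\ll_{\epsilon}n^{\epsilon}$ to bound $D(s)$ absolutely there and to render the truncation error at a height $T$ (a fixed power of $X$) negligible. I would then move the contour slightly to the left of $\Re(s)=1$, staying inside the classical zero-free region of $\zeta(s)$ and inside a fixed neighbourhood of $s=1$ in which $L(s,\chi_D)$ and $P(s)$ do not vanish ($D$ being fixed throughout). Crossing $s=1$ deposits the residue above as the main term, and what remains is the integral over a truncated Hankel-type contour hugging $s=1$ together with the left-vertical and horizontal segments; on these one bounds $\mathcal{G}(s)$ by the estimates below and $\zeta,L(\cdot,\chi_D)$ further out by convexity. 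Balancing the radius of the circle and the truncation height against $X$ produces the relative saving; a convenient, if non-optimal, choice of these parameters yields the factor $1/\sqrt{\log X}$.

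The heart of the matter --- and the main obstacle --- is the control of $P(s)$, $P(s)^{-1}$ and $P'(s)/P(s)$ \emph{uniformly in $N$} on the whole contour. From \eqref{PSS} I would write
\begin{equation*}
\log P(s)=\eta\sum_{p\mid N}\log\!\bigl[(1-p^{-s})(1-\chi_D(p)p^{-s})\bigr]+\sum_{p\nmid N}\log\bigl[\cdots\bigr],
\end{equation*}
where the tail over $p\nmid N$ converges absolutely for $\Re(s)>1/2$ (the $p^{-s}$ terms cancel in each local factor, leaving $O(p^{-2\sigma})$) and is therefore $O_{\eta}(1)$, while the finite sum over $p\mid N$ is $O_{\eta}\bigl(\sum_{p\mid N}p^{-\sigma}\bigr)=O_{\eta}\bigl(\log\log(\omega(N)+3)\bigr)=O_{\eta}(\log L_N)$, the key input being $\sum_{p\mid N}1/p\ll\log\log(\omega(N)+3)$. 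Consequently $P(s)^{\pm1}$ and $P'(s)/P(s)$ are bounded by a fixed power of $L_N=\log(\omega(N)+3)$ throughout the region $\Re(s)\ge 1-\delta$. Propagating these $L_N$-powers through the residue (which brings in $\mathcal{G}'(1)/\mathcal{G}(1)$, again a power of $L_N$) and through the contour estimates is exactly what forces the exponent $2e\eta+2$ in the error and the admissible range $X\ge\exp(CL_N^{2e\eta+2})$, the latter guaranteeing that the main term dominates. Keeping every bound uniform in the number of prime divisors of $N$ (rather than in $N$ itself) is the delicate bookkeeping on which the whole proposition turns.
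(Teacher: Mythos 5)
Your overall strategy is the same as the paper's (truncated Perron formula for $D(s)=\zeta^{\eta}(s)L^{\eta}(s,\chi_{D})P(s)$, a shift to a truncated Hankel contour at $s=1$ in the Selberg--Delange style of \cite[Lemma 4.1]{LAUL} and \cite[Corollary II.0.18]{Tenenbaum}, and Mertens-type control of $P$), but the step on which the whole proposition turns --- uniformity in $N$ --- is wrong as you state it. The claim that $P(s)^{\pm1}$ and $P'(s)/P(s)$ are bounded by a fixed power of $L_{N}$ throughout a \emph{fixed} strip $\Re(s)\ge 1-\delta$ is false. Take $N=p_{1}p_{2}\cdots p_{k}$, the product of the first $k$ primes, and $s=\sigma=1-\delta$: then $\sum_{p\mid N}p^{-\sigma}=\sum_{n\le k}p_{n}^{\delta-1}\gg_{\delta}k^{\delta}(\log k)^{\delta-1}$, a genuine power of $\omega(N)$, so $\log|P(\sigma)|$ and $|P'(\sigma)/P(\sigma)|$ are powers of $\omega(N)$, not $O_{\eta}(\log L_{N})$. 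The inequality $\sum_{p\mid N}p^{-\sigma}\ll\log L_{N}$ you invoke needs, after the monotonicity reduction to the first $\omega(N)$ primes, the extra factor $\max_{n\le\omega(N)}p_{n}^{1-\sigma}=\exp\bigl(O((1-\sigma)L_{N})\bigr)$ to be $O(1)$, i.e.\ it needs $1-\sigma\ll 1/L_{N}$. This is exactly why the paper keeps its contour at $\Re(s)=1-c/\log T$ and imposes the side condition $T\ge\exp(2cL_{N})$, and it is this condition (transferred to $X$ once $T$ is chosen in terms of $X$), not merely ``making the main term dominate,'' that produces the hypothesis $X\ge\exp(CL_{N}^{2e\eta+2})$. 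Your argument never derives this constraint, and without it the bound you rely on does not exist.

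The second gap is that your two parameter choices are mutually incompatible, so as written each half of the proof invalidates the other. If $T$ is a fixed power $X^{\theta}$ (your choice for the Perron truncation), then on the shifted contour $\Re(s)=1-c/\log T$ one has $X^{\sigma}=X\exp(-c\log X/\log T)=Xe^{-c/\theta}$: the vertical-segment contribution, and even the error term in the Hankel evaluation $1+O(e^{-c\log X/(2\log T)})$, are then constant multiples of the main term $X(\log X)^{\eta-1}$, and no asymptotic formula follows; unconditionally there is no fixed-width zero-free region for $\zeta$, so this cannot be fixed by shifting further left. One is forced to take $\log T=o(\log X)$, and the paper's choice $T=\exp(c_{1}\sqrt{\log X})$ is precisely what yields the stated relative error $L_{N}^{2e\eta+2}/\sqrt{\log X}$ through the second-order Hankel term $L_{N}^{2e\eta}\,X(\log X)^{\eta-2}\log T$ together with $P(1)\gg L_{N}^{-2}$. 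But once $T$ is sub-polynomial in $X$, your truncation estimate collapses: the bound you propose is $\ll_{\epsilon}X^{1+\epsilon}/T$, and $X^{\epsilon}\ge T$ for every fixed $\epsilon>0$ and large $X$, so this bound exceeds $X$ and is useless. The paper's repair is the point of its case analysis: take $\epsilon=1/(2\log X)$ so that $r^{*}_{\mathcal{Q}}(n)\ll n^{1/(2\log X)}=O(1)$ on the relevant range, keep the factor $\eta^{\omega(n)}$ (which really is not $O(1)$ in this regime), and control it near $n\approx X$ by Cauchy--Schwarz against $\sum_{n}\eta^{2\omega(n)}\ll X(\log X)^{\eta^{2}-1}$, giving a truncation error $\ll X(\log X)^{(\eta^{2}-1)/2}T^{-1/4}$, which is acceptable for $T=\exp(c_{1}\sqrt{\log X})$. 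With these two repairs your outline becomes the paper's proof; without them it does not close.
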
  
\noindent
\textbf{Proof of the \propref{MVEBQF}:} Following the argument of \cite[Lemma 4.1]{LAUL}, we obtained our result. Here, we give details of our proof. By Perron's formula and \eqref{DSLF}, we have 
\begin{equation}\label{PerronF}
\begin{split}
\mathcal{E}_{\eta}( X )  &=\sum_{n \le X \atop \gcd(n,N) =1  } \mu^{2}(n) \eta^{\omega(n)} r^{*}_{\mathcal{Q}}(n) =  \frac{1}{2 \pi i} \int_{b-iT}^{b+iT} \zeta^{\eta}(s) L^{\eta}(s, \chi_{D})P(s) \frac{X^{s}}{s} ds + O(R) \\
\end{split}
\end{equation}
where $b= 1+\frac{1}{\log X}$, $T \ge 3$ and 
\begin{equation}
\begin{split}
R  &= X \sum_{n=1}^{\infty}\frac{\eta^{\omega(n)} r^{*}_{\mathcal{Q}}(n) }{n^{b}(1+T |\log(X/n)|)}.
\end{split}
\end{equation}
  From the Weil bound ($r^{*}_{\mathcal{Q}}(n) \ll_{\epsilon} n^{\epsilon}$ for any $\epsilon>0$), we chose $\epsilon= 1/ 2\log X$ so that $r^{*}_{\mathcal{Q}}(n) \ll_{\epsilon} n^{1/ 2 \log X}$ and use it to get  (with $b_{0} = 1+\frac{1}{2\log X}$)
  \begin{equation}
\begin{split}
R  & \ll X \sum_{n=1}^{\infty}\frac{\eta^{\omega(n)} }{n^{b_{0}}(1+T |\log(X/n)|)}. 
\end{split}
\end{equation}
We break the above summation into  following two parts and obtain an estimate for $R$. \\
$(i)$ $n$ with $|\log (X/n)| \le T^{-1/2}$  \quad $(ii)$ $n$ with $|\log (X/n)| > T^{-1/2}$. \\
In the first case, the estimate for  the summation over $n$ with $|\log (X/n)| \le T^{-1/2}$ is obtained as follows.
 \begin{equation*}
\begin{split}
  &  X \sum_{|n-X| \le X T^{-1/2}}\frac{\eta^{\omega(n)} }{n^{b_{0}}(1+T |\log(X/n)|)}  \ll \sum_{|n-X| \le X T^{-1/2}}  \left(\frac{X}{n} \right)^{b_{0}}\frac{\eta^{\omega(n)} }{(1+T |\log(X/n)|)}\\
& \ll  \sum_{|n-X| \le X T^{-1/2}} \eta^{\omega(n)}  \ll  \left( \sum_{|n-X| \le X T^{-1/2}} \eta^{2\omega(n)} \right)^{\frac{1}{2}}  \left( \sum_{|n-X| \le X T^{-1/2}} \right)^{\frac{1}{2}} \ll  \frac{X (\log X)^{\frac{\eta^2-1}{2}}} {T^{1/4}}.
\end{split}
\end{equation*}
Which is obtained using the Cauchy-Schwarz inequality and argument as in \cite[Lemma 4.1]{LAUL}. In the second case, an estimate is obtained as follows.
 \begin{equation*}
\begin{split}
  &  X \sum_{|n-X| > X T^{-1/2}}\frac{\eta^{\omega(n)} }{n^{b_{0}}(1+T |\log(X/n)|)}  \ll  \frac{X}{T^{1/4}} \sum_{|n-X| > X T^{-1/2}} \frac{\eta^{\omega(n)} } {n^{b_{0}}} \ll \frac{X}{T^{1/4}} (\log X)^ {\eta-1}.\\
\end{split}
\end{equation*}
  Thus, we have  
   \begin{equation}
\begin{split}
R  & \ll X \sum_{n=1}^{\infty}\frac{\eta^{\omega(n)} }{n^{b_{0}}(1+T |\log(X/n)|)} \ll {\rm max} \left\{ \frac{X (\log X)^{\frac{\eta^2-1}{2}}} {T^{1/4}}, \frac{X (\log X)^{\frac{\eta-1}{2}}} {T^{1/2}} \right\}.
\end{split}
\end{equation}
To complete the proof, we need to evaluate the integral given in  \eqref{PerronF}. Let $c$ be a positive  constant and $ \sigma_{c}(T) := 1 - \frac{c}{ \log T}$.  Following the argument as in \cite{LAUL}, let $r= 1/2 \log X$ and assume that $1-r >  \sigma_{c}(T) $. The truncated Hankel contour $\Gamma$ is a positively oriented contour formed from the circle $|s-1|=r$ excluding the points $s=1-r$ and joining the half segment  $[ \sigma_{c}(T), 1-r]$ which is traced out twice with respective argument $+\pi$ and $-\pi$. We apply the Cauchy residue theorem to the closed path consisting of vertical line segment $[b-iT, b+iT]$ and $\mathcal{P}_{v}^{\pm}:= [ \sigma_{c/2}(T), \sigma_{c/2}(T) \pm iT]$, two horizontal line segment  $\mathcal{P}_{h}^{\pm}:= [ \sigma_{c/2}(T) \pm iT, b \pm iT]$ and the Hankel contour $\Gamma$.

For $\Re(s) \ge \sigma_{c}(T)$, we observe that 
\begin{equation}\label{PSBound}
\begin{split}
P(s)  & \ll \left| \prod_{p \mid N}\left( 1+ \frac{1}{p^{s}}\right)^{\eta}  \prod_{p \mid N}  \left( 1- \frac{\chi_{D}(p)}{p^{s}}\right)^{\eta} \right|  \ll   \prod_{p \mid N}\left( 1+ \frac{1}{p^{ \sigma_{c}(T)}}\right)^{2\eta} \\
& \ll \exp \{2 \eta \exp(2c L_{N}/\log T) \log L_{N} \} \ll L_{N}^{2e \eta}
\\
\end{split}
\end{equation}
using Merten's third theorem and $p_{n} \approx n \log n$ ( $p_{n}$ denotes the $n^{\rm th}$ prime), provided \linebreak  $T \ge \exp(2c L_{N}) $. From the above estimate and Cauchy's integral formula, we obtain that (for $\Re(s) \ge \sigma_{c/2}(T)$)
 \begin{equation}\label{DPSBound}
\begin{split}
P'(s) &= \frac{1}{2 \pi i} \int_{|z-\sigma_{c/2}(T)|= \frac{c}{ \log T}} \frac{P(z)}{(z-s)^{2}} dz   \ll \exp \{2 \eta \exp(2c L_{N}/\log T) \log L_{N} \} \log T \\
& \ll L_{N}^{2e \eta} \log T.
\end{split}
\end{equation}
From the bound $ \zeta(s) \ll \log T$ for $s \in \mathcal{P}_{h}^{\pm} \cup \mathcal{P}_{h}^{\pm} \cup \Gamma$, and \eqref{PSBound}, it is easy to have 
\begin{equation}
\begin{split}
 \frac{1}{2 \pi i} \int_{\mathcal{P}_{h}^{\pm} \cup \mathcal{P}_{h}^{\pm}} \zeta^{\eta}(s) L^{\eta}(s, \chi_{D})P(s)   \frac{X^{s}}{s}ds   \ll L_{N}^{2e \eta} \left(  \frac{X}{T} + X^{\sigma_{c/2}(T)}\right) (\log T)^{\eta-1}. \\
\end{split}
\end{equation}
Using the property of $\zeta(s)$ and bound for $P'(s)$  (given in \eqref{DPSBound}), we have (for $s \in \Gamma$)
\begin{equation}
\begin{split}
\frac{((s-1) \zeta(s))^{\eta}}{s} L^{\eta}(s, \chi_{D}) P(s) = L^{\eta}(1, \chi_{D}) P(1) + O_{\eta}( L^{\eta}(1, \chi_{D}) L_{N}^{2e \eta} \log T |s-1| ) 
\end{split}
\end{equation}
provided $T \ge \exp(2c L_{N}) $. This allows to get 
\begin{equation}
\begin{split}
& \int_{ \Gamma} \zeta^{\eta}(s) L^{\eta}(s, \chi_{D})P(s) \frac{X^{s}}{s} ds  \\
 & \quad  = L^{\eta}(1, \chi_{D}) P(1) \int_{ \Gamma} (s-1)^{-\eta} X^{s} ds   + O_{\eta}\left( L^{\eta}(1, \chi_{D}) L_{N}^{2e \eta} \log T \int_{ \Gamma} |s-1|^{1-\eta}  X^{s} ds \right).
\end{split}
\end{equation}
Using   \cite[Corollary II.0.18]{Tenenbaum}, we have
\begin{equation*}
\begin{split}
  \int_{ \Gamma} (s-1)^{-\eta} X^{s} ds &= X (\log X)^{\eta-1} \left\{ 1+ O_{\eta} (e^{-\frac{c \log X}{2 \log T}}) \right\} \\ \quad {\rm and } \quad  \int_{ \Gamma} |s-1|^{1-\eta}  X^{s} ds &\ll \int_{\sigma_{c/2}(T)}^{1-r} (1-y)^{1-\eta} X^{y} dy + X^{1+r} r^{2-\eta}  \ll X (\log X)^{\eta-2}.
\end{split}
\end{equation*}
Thus, we have 
\begin{equation}
\begin{split}
& \int_{ \Gamma} \zeta^{\eta}(s) L^{\eta}(s, \chi_{D})P(s) \frac{X^{s}}{s} ds \\
 & \quad = \frac{L^{\eta}(1, \chi_{D}) P(1)}{\Gamma(\eta)}  X (\log X)^{\eta-1}  \left\{ 1+ O_{\eta} \left( e^{-\frac{c \log X}{2 \log T}} \right) \right\} + O_{\eta}\left(  L^{\eta}(1, \chi_{D}) L_{N}^{2e \eta} X \log T  (\log X)^{\eta-2}  \right). 
\end{split}
\end{equation}
Applying Cauchy residue theorem and Combining all the estimates, we have 
\begin{equation*}
\begin{split}
 & \frac{1}{2 \pi i} \int_{b-iT}^{b+iT} \zeta^{\eta}(s) L^{\eta}(s, \chi_{D})P(s) \frac{X^{s}}{s} ds \\
& =\frac{L^{\eta}(1, \chi_{D}) P(1)}{\Gamma(\eta)}  X (\log X)^{\eta-1}  \left\{ 1+ O_{\eta} (e^{-\frac{c \log X}{2 \log T}}) \right\} + O_{\eta}\left(  L^{\eta}(1, \chi_{D}) L_{N}^{2e \eta} \log T X (\log X)^{\eta-2}  \right) \\
& \quad  + O_{\eta}( L_{N}^{2e \eta} \left(  \frac{X}{T} + X^{\sigma_{c/2}(T)}\right) (\log T)^{\eta-1}).
\end{split}
\end{equation*}
Thus, we have
\begin{equation}
\begin{split}
\mathcal{E}_{\eta}( X )  &=\sum_{n \le X \atop \gcd(n,N) =1  } \mu^{2}(n) \eta^{\omega(n)} r^{*}_{\mathcal{Q}}(n) =   \frac{L^{\eta}(1, \chi_{D}) P(1)}{\Gamma(\eta)}  X (\log X)^{\eta-1} + {\rm Err} \\
\end{split}
\end{equation}
where
\begin{equation*}
\begin{split}
 {\rm Err} 
& =   O_{\eta} \left(\frac{L^{\eta}(1, \chi_{D}) P(1)}{\Gamma(\eta)}  X (\log X)^{\eta-1} (e^{-\frac{c \log X}{2 \log T}}) \right) + O_{\eta}\left(  L^{\eta}(1, \chi_{D}) L_{N}^{2e \eta} \log T X (\log X)^{\eta-2}  \right) \\
& \quad  + O_{\eta} \left( L_{N}^{2e \eta} \left(  \frac{X}{T} + X^{\sigma_{c/2}(T)}\right) (\log T)^{\eta-1}\right) + O_{\eta} \left( {\rm max} \left\{ \frac{X (\log X)^{\frac{\eta^2-1}{2}}} {T^{1/4}}, \frac{X (\log X)^{\frac{\eta-1}{2}}} {T^{1/2}} \right\} \right).
\end{split}
\end{equation*}
Clearly, it is easy to see that 
$$
P(1) \gg L_{N}^{-2}
$$ 
using Merten's theorem. We choose  $T = \exp( {c_{1} \sqrt{\log X}})$ so that $T \ge \exp(2c L_{N}) $ is satisfied. Hence, for each $X \ge  \exp(2c L_{N}^{2})$, The error term satisfies following bound.
\begin{equation*}
\begin{split}
 {\rm Err} 
& \ll \frac{L^{\eta}(1, \chi_{D}) P(1)}{\Gamma(\eta)}  X (\log X)^{\eta-1}   \frac{ L_{N}^{2e \eta+2}}{\sqrt{\log X}}. 
\end{split}
\end{equation*}
Thus, we have the required result. This completes the proof. 

\smallskip
Let $Y$ be a positive real number. From Deligne's estimate, it is well-known that for each prime $p$,  $\lambda_{f}(p) = 2 \cos \theta_{p}, $ and more generally
$$
\lambda_{f}(p^{m}) = \frac{\sin ((m+1)\theta_{p})}{\sin~ \theta_{p}}, 
$$
for any positive integer $m.$  We define a  multiplicative function $h_{Y}$  supported on square-free positive integers. The function $h_{Y}$ is given by: 
\begin{equation}\label{Auxfun}
\begin{split}
& \qquad \quad h_{Y}(p) =
\begin{cases}
 \alpha \left( \frac{\log p}{\log Y} \right),  \qquad   {\rm if } \qquad    p \le Y \quad{\rm and } \quad  p \nmid N, \\
-2,  \qquad \qquad \quad  {\rm if } \qquad    p > Y \quad{\rm and } \quad p \nmid N,\\
0,  \qquad \qquad  \qquad  {\rm if } \qquad     p \mid N, 
 \end{cases} \\
\end{split}
\end{equation}
where  $\quad \alpha : [0, 1]  \rightarrow{[-2, 2]}$ is such that, $\alpha(0) = 2$ and $\alpha(t) = 2 \cos(\frac{\pi}{m+1})$ if $\frac{1}{m+1} < t \leq \frac{1}{m}$ for $m \in  \mathbf{N}$.
  We make use of the function $h_{Y}$  to obtain a lower bound for the sum $S(f, \mathcal{Q}; Y^{u} )$ for some $u\ge 1$,  following the idea of \cite{KLSW2010}.
  
  \begin{prop}\label{PropLower}
Let $U \ge 1$ be a real number,   $h_{Y}(n)$ (defined in \eqref{Auxfun}) and $\alpha(t)$ be as above with $\alpha_{0}>0$. Let $N \le X^{U}$ be a positive integer. Then, we have
\begin{equation*}
\begin{split}
 \sum_{n \le Y^{u} \atop \gcd(n,N) =1 }   h_{Y}(n) r^{*}_{\mathcal{Q}}(n) &= (\sigma(u) + o_{\alpha, U}(1)) \frac{P(1) L(1, \chi_{D})^{\alpha_{0}}}{\Gamma(\alpha_{0})} (\log Y^{u})^{\alpha_{0}-1} Y^{u} 
\end{split}
\end{equation*}
uniformly for $u \in \left[\frac{1}{U}, U \right]$ where $P(s)$ is given in \eqref{PSS} and 
\begin{equation*}
\begin{split}
\sigma(u)  &= (u)^{\alpha_{0}-1} + \sum_{j=1}^{\infty} \frac{(-1)^{j}}{j}  I_{j}(u)  \\
\end{split}
\end{equation*}
with
\begin{equation*}
\begin{split}
 I_{j}(u) &= \int_{\Delta_{j}} (u- t_{1}- t_{2}- \cdots- t_{j})^{\alpha_{0}-1} \prod_{i=1}^{j} (\alpha_{0}- \alpha(t))  \frac{dt_{1} dt_{2} \cdots dt_{j}}{ {t_{1}t_{2} \cdots t_{j}}} \\
\end{split}
\end{equation*}
and
\begin{equation*}
\begin{split}
  & \Delta_{j} = \{ ( t_{1}, t_{2}, \cdots, t_{j}) \in [0, \infty) \mid  |t_{1}+ t_{2}+ \cdots+t_{j}| \le u \}.
\end{split}
\end{equation*}
\end{prop}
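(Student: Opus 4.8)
The plan is to view $\sum_{n\le Y^{u},\,\gcd(n,N)=1}h_{Y}(n)r^{*}_{\mathcal{Q}}(n)$ as a controlled perturbation of the baseline mean value $\mathcal{E}_{\alpha_{0}}$ furnished by \propref{MVEBQF}, taking $\alpha_{0}=\alpha(0)=2$ and extending $\alpha(t)=-2$ for $t>1$, so that $h_{Y}(p)=\alpha(\log p/\log Y)$ for every $p\nmid N$. Setting $\beta_{p}:=\alpha_{0}-h_{Y}(p)\in[0,4]$ and expanding the squarefree-supported multiplicative function about its ``constant'' value $\alpha_{0}$ gives
\[
h_{Y}(n)=\sum_{d\mid n}\Big(\prod_{p\mid d}(-\beta_{p})\Big)\alpha_{0}^{\omega(n/d)},
\]
whence, by multiplicativity of $r^{*}_{\mathcal{Q}}$ on squarefree arguments,
\[
\sum_{\substack{n\le Y^{u}\\ \gcd(n,N)=1}}h_{Y}(n)\,r^{*}_{\mathcal{Q}}(n)
=\sum_{\substack{d\le Y^{u}\\ \gcd(d,N)=1}}\mu^{2}(d)\Big(\prod_{p\mid d}(-\beta_{p})\Big)r^{*}_{\mathcal{Q}}(d)
\sum_{\substack{m\le Y^{u}/d\\ \gcd(m,dN)=1}}\mu^{2}(m)\,\alpha_{0}^{\omega(m)}\,r^{*}_{\mathcal{Q}}(m).
\]
The inner sum is exactly $\mathcal{E}_{\alpha_{0}}(Y^{u}/d)$ subject to the extra coprimality to $d$, which is what I would feed into \propref{MVEBQF}.

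Applying \propref{MVEBQF} replaces the inner sum by $\frac{P(1)L(1,\chi_{D})^{\alpha_{0}}}{\Gamma(\alpha_{0})}\frac{Y^{u}}{d}\big(\log(Y^{u}/d)\big)^{\alpha_{0}-1}$, up to the stated relative error and a local correction turning $P$ into a $d$-dependent $P^{(d)}$ agreeing with $P$ to leading order. Grouping the outer sum by $j=\omega(d)$ and parametrising $p_{i}=Y^{t_{i}}$, the prime sums pass to integrals by the prime number theorem: the relation $\sum_{p\le Y^{t}}p^{-1}=\log t+O(1)$ identifies $dt/t$ as the correct measure, while the oscillating part $\chi_{D}(p)$ of $r^{*}_{\mathcal{Q}}(p)=1+\chi_{D}(p)$ sums to a convergent quantity absorbed into the constants $L(1,\chi_{D})$ and $P(1)$. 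Here the factor $1/d=\prod_{i}p_{i}^{-1}$ produces $\prod_{i}dt_{i}/t_{i}$, the weight $\prod_{i}(-\beta_{p_{i}})$ produces $\prod_{i}\big(-(\alpha_{0}-\alpha(t_{i}))\big)$, the constraint $d\le Y^{u}$ becomes $\sum_{i}t_{i}\le u$ (the simplex $\Delta_{j}$), and $\big(\log(Y^{u}/d)\big)^{\alpha_{0}-1}=(\log Y)^{\alpha_{0}-1}(u-t_{1}-\cdots-t_{j})^{\alpha_{0}-1}$ supplies both the global power of $\log$ and the factor $(u-\sum_{i}t_{i})^{\alpha_{0}-1}$ inside $I_{j}(u)$. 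Thus the $j$-th group contributes $\tfrac{(-1)^{j}}{j!}I_{j}(u)$ (the $1/j!$ arising from unordered prime tuples), and summing over $j$ assembles $\sigma(u)=\sum_{j\ge0}\tfrac{(-1)^{j}}{j!}I_{j}(u)$ (with $I_{0}(u)=u^{\alpha_{0}-1}$) multiplied by $\frac{P(1)L(1,\chi_{D})^{\alpha_{0}}}{\Gamma(\alpha_{0})}(\log Y)^{\alpha_{0}-1}Y^{u}$, which is the asserted main term.

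The main obstacle is making this uniform in $u\in[1/U,U]$ while controlling the accumulated error, and three points need care. First, the $d$ with $Y^{u}/d$ below the range of validity of \propref{MVEBQF} (say within $\exp(CL_{N}^{2e\alpha_{0}+2})$ of $Y^{u}$) must be bounded trivially using $\beta_{p}\le4$ together with the squarefree and coprimality constraints; this is where the hypothesis $N\le X^{U}$ enters, keeping $L_{N}$ small against $\log Y^{u}$. Second, the passage from prime sums to the integrals $I_{j}(u)$ requires the prime number theorem with an $\exp(-c\sqrt{\log})$ error summed over $j$, alongside the convergence of $\sum_{j}\tfrac{1}{j!}\big(\int_{0}^{u}\tfrac{\alpha_{0}-\alpha(t)}{t}\,dt\big)^{j}$, which holds because $\alpha_{0}-\alpha(t)\ll t^{2}$ as $t\to0$ makes the kernel $(\alpha_{0}-\alpha(t))/t$ integrable at the origin. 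Third, the oscillating $\chi_{D}$-contributions and the difference $P^{(d)}(1)-P(1)$ must be shown to affect only lower-order terms. The cleanest way to package these uniformly is to run the argument on the Dirichlet-series side: factor the generating series as $\zeta(s)^{\alpha_{0}}L(s,\chi_{D})^{\alpha_{0}}$ times a function holomorphic for $\Re(s)>\tfrac12$, zoom near $s=1$ via $s=1+w/\log Y$ so that the $Y$-dependent Euler factors coalesce into $\exp\{-\int_{0}^{\infty}(\alpha_{0}-\alpha(t))e^{-wt}\,dt/t\}$, and apply the Hankel-contour (Selberg--Delange) estimate already used for \propref{MVEBQF} via \cite[Corollary II.0.18]{Tenenbaum}; expanding this exponential and inverting term by term reproduces precisely the series defining $\sigma(u)$ and yields the $o_{\alpha,U}(1)$ error uniformly.
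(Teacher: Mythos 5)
Your proposal is correct in substance and follows essentially the same route as the paper's proof: expand $h_{Y}$ as a perturbation of the multiplicative function $\alpha_{0}^{\omega}$, feed the inner sums into \propref{MVEBQF}, and convert the outer prime sums into simplex integrals by Mertens/PNT, which is exactly the reduction the paper performs before deferring to Matom\"aki's Lemma~6. However, your exact bookkeeping surfaces a real discrepancy with the statement, and it is resolved in your favour: counting unordered squarefree $d$ with $\omega(d)=j$ by ordered tuples of distinct primes yields $\frac{(-1)^{j}}{j!}I_{j}(u)$, and the factorization $\bigl(\log(Y^{u}/d)\bigr)^{\alpha_{0}-1}=(\log Y)^{\alpha_{0}-1}\bigl(u-\sum_{i}t_{i}\bigr)^{\alpha_{0}-1}$ yields the normalization $(\log Y)^{\alpha_{0}-1}$, whereas the statement carries $\frac{(-1)^{j}}{j}$ and $(\log Y^{u})^{\alpha_{0}-1}$. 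A clean check that your constants are the right ones: taking Laplace transforms of your series (with $\alpha$ extended by $-2$ beyond $t=1$, as you do) gives
\[
\hat{\sigma}(s)=\Gamma(\alpha_{0})\,s^{-\alpha_{0}}\exp\Bigl(-\int_{0}^{\infty}\frac{\alpha_{0}-\alpha(t)}{t}\,e^{-st}\,dt\Bigr),
\]
which satisfies $-\hat{\sigma}'(s)=\hat{\sigma}(s)\hat{\alpha}(s)$, i.e.\ precisely the integral equation $u\sigma(u)=\int_{0}^{u}\sigma(t)\alpha(u-t)\,dt$ by which the paper's own Remark (quoting \cite{KMatomaki}) characterizes $\sigma$; the $1/j$ series does not satisfy it. Likewise $(\log Y)^{\alpha_{0}-1}$ is the normalization the paper itself uses when it applies the proposition in \eqref{PBound}, and is forced by the test case $\alpha\equiv\alpha_{0}$ against \propref{MVEBQF}. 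So these are transcription errors in the statement, not gaps in your argument, and what you prove is the corrected form actually needed for \thmref{ExtMatKLSW}.

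Two points of your sketch genuinely need the care you flag, and one of them deserves emphasis. Because your inner sums carry coprimality to $dN$ rather than $N$, \propref{MVEBQF} returns the constant $P^{(d)}(1)=P(1)\prod_{p\mid d}\bigl(1+\alpha_{0}\,r^{*}_{\mathcal{Q}}(p)/p\bigr)^{-1}$ (compare \eqref{PSS}), which is \emph{not} uniformly close to $P(1)$ when $d$ has many small prime factors; the argument survives only because $\alpha_{0}-\alpha(t)\ll t^{2}$ near $t=0$, so each small prime enters the outer sum with weight $\ll(\log p)^{2}/\bigl(p(\log Y)^{2}\bigr)$ and all such local corrections are $o(1)$ after summation --- you invoke this bound for integrability of the kernel, but it is doing double duty here and should be said explicitly. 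Your treatment of $r^{*}_{\mathcal{Q}}(p)=1+\chi_{D}(p)$, with the $\chi_{D}$-part contributing $o(1)$ by convergence of $\sum_{p}\chi_{D}(p)p^{-1}$ for fixed $D$, is a cleaner substitute for the paper's appeal to the Chebotarev density theorem; both deliver the needed fact that $r^{*}_{\mathcal{Q}}(p)$ has average $1$ in logarithmically weighted prime sums. Finally, your suggestion to package the uniformity via Selberg--Delange on the Dirichlet-series side is legitimate and is essentially the route of \cite{KLSW2010}; the paper, following \cite{KMatomaki}, stays on the elementary side.
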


\textbf{ Proof of \propref{PropLower}:} We follow the argument of \cite[Lemma 6]{KMatomaki} to give a brief proof of the  \propref{PropLower}. From inclusion-exclusion type identity, we have
\begin{equation*}
\begin{split}
 \sum_{n \le Y^{u} \atop \gcd(n,N) =1 }   h_{Y}(n) r^{*}_{\mathcal{Q}}(n) &=   \sum_{n \le Y^{u} \atop \gcd(n,N) =1 } \alpha_{0}^{\omega(n)} r^{*}_{\mathcal{Q}}(n) \\
 & \quad + \sum_{j=1}^{\infty} \frac{(-1)^{j}}{j} \!\!\!\!\!\!\!\!  \sum_{p_{1}p_{2} \cdots p_{j} \le Y^{u} \atop \gcd({p_{1}p_{2} \cdots p_{j}, N) =1 }}  \prod_{i=1}^{j} (\alpha_{0}- h_{Y}(p_{i})) r^{*}_{\mathcal{Q}}(p_{i})  \sum_{n \le \frac{Y^{u}}{p_{1}p_{2} \cdots p_{j}} \atop \gcd(n,N) =1 } \alpha_{0}^{\omega(n)} r^{*}_{\mathcal{Q}}(n).
\end{split}
\end{equation*}
From the Chebatorev density theorem, as the class number $h(D)=1$, the density of primes represented by ${\mathcal{Q}}$ of discriminant $D$ is $1/2$, and for such primes, we have  $ r^{*}_{\mathcal{Q}}(p)=2$ and $0$ otherwise. See \cite[Eq. 1.1 \& 1.2]{AZaman} On applying \propref{MVEBQF},  we have 
\begin{equation*}
\begin{split}
& \sum_{n \le Y^{u} \atop \gcd(n,N) =1 }   h_{Y}(n) r^{*}_{\mathcal{Q}}(n) \\
 &=   \frac{P(1) L(1, \chi_{D})^{\eta}}{\Gamma(\eta)} Y^{u}
\begin{cases} 
  (\log Y^{u})^{\alpha_{0}-1} \\
\displaystyle{+ \sum_{j=1}^{\infty} \frac{(-1)^{j}}{j}  \sum_{p_{1}p_{2} \cdots p_{j} \le Y^{u} \atop \gcd({p_{1}p_{2} \cdots p_{j}, N) =1 }}  \prod_{i=1}^{j} (\alpha_{0}- h_{Y}(p_{i})) r^{*}_{\mathcal{Q}}(p_{i}) \frac{(\log \frac{Y^{u}}{p_{1}p_{2} \cdots p_{j}})^{\alpha_{0}-1}}{ {p_{1}p_{2} \cdots p_{j}}}}
 \end{cases} \\
 &=   \frac{P(1) L(1, \chi_{D})^{\eta}}{\Gamma(\eta)} Y^{u}
\begin{cases} 
  (\log Y^{u})^{\alpha_{0}-1} \\
\displaystyle{+ \sum_{j=1}^{\infty} \frac{(-1)^{j}}{j}  \sum_{p_{1}p_{2} \cdots p_{j} \le Y^{u} \atop \gcd({p_{1}p_{2} \cdots p_{j}, N) =1 }}  \prod_{i=1}^{j} (\alpha_{0}- h_{Y}(p_{i})) \frac{(\log \frac{Y^{u}}{p_{1}p_{2} \cdots p_{j}})^{\alpha_{0}-1}}{ {p_{1}p_{2} \cdots p_{j}}}}.
 \end{cases}
\end{split}
\end{equation*}
Thus, we have the exactly the same expression as in  the proof of \cite[Lemma 6]{KMatomaki}. So, removing the coprimality condition from the expression and then using the prime number theorem, we have the required result.

\begin{rmk}
Let $\alpha$ be a step function given by;  $\alpha: [0, \infty) \rightarrow \mathbb{R}$ with $\alpha(t) = \alpha_{k}$ when $t \in [x_{k}, x_{k+1}]$ for each $k= 0,1,2, \cdots K$.  Here $K > 0$ and $[x_{k}, x_{k+1}]$ is partition of $[0, \infty)$ (in our case $x_{0}=0$, $x_{k} = \frac{1}{k}$,  $\alpha_{0} =2$ and $\alpha_{k} = 2 \cos(\frac{\pi}{k+1})$). In  \cite[Lemma 6]{KMatomaki}, it was mentioned that the function, $\sigma(u)$ is the unique solution of the integral equation 
$$
u \sigma(u) = \int_{0}^{u} \sigma(t) \alpha(u-t) dt
$$
with the initial condition $\sigma(u)= u^{\alpha_{0}-1}$ for $u \in (0, x_{1}]$. For the step function $\alpha$, it is shown that $\sigma(4/3)>0$ in \cite[Lemma 8]{KMatomaki}.
Hence,  for $u = u_{0} = 4/3$, we have
\begin{equation}\label{PBound}
\begin{split}
 \sum_{n \le Y^{u_{0}} \atop \gcd(n,N) =1 }   h_{Y}(n) r^{*}_{\mathcal{Q}}(n) &= (\sigma(u_{0}) + o_{\alpha, U}(1)) \frac{P(1) L(1, \chi_{D})^{\alpha_{0}}}{\Gamma(\alpha_{0})} (\log Y)^{\alpha_{0}-1} Y^{u_{0}} >0.
\end{split}
\end{equation}

\end{rmk}

\section{Proof of Results}
\noindent
\textbf{Proof of \thmref{PropUpper}:}
Let $1 \le Z \le \frac{X}{2}$. In order to obtain an upper bound for the sum $S(f, \mathcal{Q}; X )$ given in \eqref{Sum-PIBQF}, we introduce a smooth compactly supported function $w(x)$ satisfying: $w(x) =1$ for $x \in [2Z, X],$ $w(x) = 0$ for $x<Z$ and $x> X+Z,$ and $w^{(r)}(x) \ll_{r} Z^{-r}$ for all $r\ge 0.$ In general, for any arithmetical function $f(n), $ we have 
\begin{equation}\label{UpperEstimate}
\sum_{n \le X} f(n) = \sum_{n = 1}^{\infty} f(n)w(n) +  O\left(\sum_{n< 2Z} |f(n)| \right) + O\left(\sum_{X< n< X+ Z} |f(n)| \right). 
\end{equation}
Moreover, by Mellin's inverse transform, we have 
\begin{equation}
 \sum_{n = 1}^{\infty} f(n)w(n) = \frac{1}{2 \pi i} \int_{(b)} \tilde w(s) \left(\sum_{n\ge 1} \frac{f(n)}{n^{s}}\right)  ds, 
 \end{equation}
where $b$ is a positive real number larger than the  abscissa of absolute convergence of \linebreak $\displaystyle{\sum_{n\ge 1} \frac{f(n)}{n^{s}}}$ and the Mellin's transform $\tilde w(s)$ is given by following integral:
$$
\tilde w(s) = \int_{0}^{\infty} w(x) x^{s} \frac{dx}{x}.
$$
We observe that due to integration by parts, 
\begin{equation}\label{FourierW}
\tilde w(s) =  \frac{1}{s(s+1)\cdots(s+m-1)}\int_{0}^{\infty} w^{(m)}(x) x^{s+m-1} dx \ll \frac{Z}{X^{1-\sigma}}  \left(\frac{X}{|s|Z}\right)^{m},
\end{equation}
for any $m\ge 0,$ where $\sigma = \Re(s).$ For details, we refer to \cite[Section 3]{Jiang-Lu}.

\smallskip 
\noindent
From Equation \eqref{UpperEstimate} with $f(n) = \lambda_{f}(n)  r^{*}_{\mathcal{Q}}(n),$ we have 
\begin{equation*}
\begin{split}
\sideset{}{^{\flat }} \sum_{n \le X \atop \gcd(n,N)=1} \lambda_{f}(n)  r^{*}_{\mathcal{Q}}(n) & = \sideset{}{^{\flat }} \sum_{n \ge 1 \atop \gcd(n,N) =1} \lambda_{f}(n)  r^{*}_{\mathcal{Q}}(n)w(n)  \\
& \quad  +  O\left(\sideset{}{^{\flat }} \sum_{n < 2Z \atop \gcd(n,N) =1 } |\lambda_{f}(n)  r^{*}_{\mathcal{Q}}(n)| \right) + O\left(\sideset{}{^{\flat }} \sum_{X< n< X+ Z \atop \gcd(n,N) =1 } |\lambda_{f}(n)  r^{*}_{\mathcal{Q}}(n)| \right). 
\end{split}
\end{equation*}
Moreover, by Mellin's inverse transform, we have 
\begin{equation}\label{Perron-Type}
\sideset{}{^{\flat }} \sum_{n \ge 1 \atop \gcd(n,N) =1} \lambda_{f}(n)  r^{*}_{\mathcal{Q}}(n)w(n)  = \frac{1}{2 \pi i} \int_{(b)} \tilde w(s) L(f, \mathcal{Q}; s )  ds, 
 \end{equation}
where $b= 1+ \epsilon$ for some arbitrarily small $ \epsilon>0.$ Since   $ \lambda_{f}(n)  r^{*}_{\mathcal{Q}}(n)  \ll n^{\epsilon},$ for any $\epsilon>0,$ hence,
\begin{equation}\label{Errorbound}
\begin{split}
  O\left(\sideset{}{^{\flat }} \sum_{n < 2Z \atop \gcd(n,N) =1 } |\lambda_{f}(n)  r^{*}_{\mathcal{Q}}(n)| \right) + O\left(\sideset{}{^{\flat }} \sum_{X< n< X+ Z \atop \gcd(n,N) =1 } |\lambda_{f}(n)  r^{*}_{\mathcal{Q}}(n)| \right)  \ll Z^{1+\epsilon}.
\end{split}
\end{equation}
We know that $L(f, \mathcal{Q}; s ) $ is holomorphic for all $s \in \mathbb{C}$. We shift the line of integration from $\Re(s) = 1+\epsilon$ to $\Re(s) = \frac{1}{2}+\epsilon$ and apply Cauchy's residue theorem to get 
\begin{equation}\label{RHSInt}
\sideset{}{^{\flat }} \sum_{n \ge 1 \atop \gcd(n,N) =1} \lambda_{f}(n)  r^{*}_{\mathcal{Q}}(n)w(n)  = \frac{1}{2 \pi i} \int_{(1 + \epsilon)} \tilde w(s) L(f, \mathcal{Q}; s )  ds  =   \frac{1}{2 \pi i} \int_{(1/2+\epsilon)} \tilde w(s) L(f, \mathcal{Q}; s )  ds. 
 \end{equation}
 Since $\tilde w(s)  \ll \frac{Z}{X^{1-\sigma}}  \left(\frac{X}{|s|Z}\right)^{m}$
for any $m \ge 0,$ so  the contribution for the integral over  \linebreak $|s| \ge T = \frac{X^{1+\epsilon}}{Z}$ on the right hand side of \eqref{RHSInt} is negligibly small, i.e., $O(X^{-A})$ for any large $A>0$ if one chooses sufficiently large $m>0.$ Hence, we have 
\begin{equation*}
\begin{split}
  \frac{1}{2 \pi i} \int_{(1/2+\epsilon)} \tilde w(s) L(f, \mathcal{Q}; s )  ds  & =   \frac{1}{2 \pi i} \int_{1/2+\epsilon-iT}^{1/2+\epsilon+iT} \tilde w(s) L(f, \mathcal{Q}; s ) ds + O(X^{-A}) \\
  & \ll \int_{-T}^{T} |\tilde w(1/2+\epsilon+it)| |L(f, \mathcal{Q}; 1/2+\epsilon+it )| dt + O(X^{-A}) \\
  & \ll  \int_{0}^{T} \frac{X^{\frac{1}{2}+\epsilon}}{|\frac{1}{2}+\epsilon+it|} |L(f, \mathcal{Q}; 1/2+\epsilon+it )| dt + O(X^{-A}) \\
  & \ll \left(\int_{0}^{1} + \int_{1}^{T}\right) \frac{X^{\frac{1}{2}+\epsilon}}{|\frac{1}{2}+\epsilon+it|} |L(f, \mathcal{Q}; 1/2+\epsilon+it )| dt + O(X^{-A}), \\
  \end{split}
 \end{equation*}
where the estimate in the last lines is obtained by substituting the bound for $\tilde w(s)$ when $m=1.$  Now, we substitute the decomposition $L(f, \mathcal{Q}; s ) = L(s , f) L( s, f \otimes \chi_{D})G(s)$ from  \lemref{Decomp} and by absolute convergence of $G(s)$ for $\Re(s)> \frac{1}{2}$, we get 
\begin{equation*}
\begin{split}
 & \frac{1}{2 \pi i} \int_{(1/2+\epsilon)} \tilde w(s) L(f, \mathcal{Q}; s )  ds  \\
  & \ll \left(\int_{0}^{1} + \int_{1}^{T}\right) \frac{X^{\frac{1}{2}+\epsilon}}{|\frac{1}{2}+\epsilon+it|} |L(1/2+\epsilon+it  , f) L( 1/2+\epsilon+it , f \otimes \chi_{D})| dt + O(X^{-A}) \\
  & \ll (Nk^{2}|D|X)^{\frac{1}{2}+\epsilon} + X^{\frac{1}{2}+\epsilon} \int_{1}^{T} \frac{|L(1/2+\epsilon+it  , f) L( 1/2+\epsilon+it , f \otimes \chi_{D})|}{t} dt + O(X^{-A}). \\
  \end{split}
 \end{equation*}
 We apply the dyadic division method and then Cauchy-Schwartz inequality to get   
\begin{equation*}
\begin{split}
 & \frac{1}{2 \pi i} \int_{(1/2+\epsilon)} \tilde w(s) L(f, \mathcal{Q}; s )  ds  \\
 &   \ll 
   \begin{cases}
  (Nk^{2}|D|X)^{\frac{1}{2}+\epsilon}  + O(X^{-A})\\
  + ~ X^{\frac{1}{2}+\epsilon} \underset{1 \le T_{1} \le T}{\rm max} \!\! \left\{ \frac{1}{T_{1}} \!\!  \left( \int_{\frac{T_{1}}{2}}^{T_{1}}  |L(1/2+\epsilon+it  , f)|^{2} dt \right)^{\frac{1}{2}} \!\! \left(\int_{\frac{T_{1}}{2}}^{T_{1}} L( 1/2+\epsilon+it , f \otimes \chi_{D})|^{2} dt \right)^{\frac{1}{2}}\right\}  \\
  \end{cases} \\
 &  \ll (Nk^{2}|D|X)^{\frac{1}{2}+\epsilon} + X^{\frac{1}{2}+\epsilon} \underset{1 \le T_{1} \le T}{\rm max} \left(\frac{1}{T_{1}} (Nk^{2} T_{1}^{2})^{\frac{1}{4}+\epsilon} (Nk^{2}|D|^{2}T_{1}^{2})^{\frac{1}{4}+\epsilon}\right)+ O(X^{-A})\\
 &  \ll (Nk^{2}|D|X)^{\frac{1}{2}+\epsilon}.
  \end{split}
 \end{equation*}
 The last inequality is obtained by substituting $T= \frac{X^{1+\epsilon}}{Z}$ and choosing $Z= X^{\frac{1}{2}+\epsilon}$.  Thus
 \begin{equation}\label{upperbound}
\begin{split}
 & \frac{1}{2 \pi i} \int_{(1/2+\epsilon)} \tilde w(s) L(f, \mathcal{Q}; s )  ds   \ll (Nk^{2}|D|X)^{\frac{1}{2}+\epsilon}. \\
  \end{split}
 \end{equation}
From equations \eqref{Perron-Type}, \eqref{Errorbound}, \eqref{RHSInt} and \eqref{upperbound} and choosing $Z = X^{\frac{1}{2}+\epsilon}$,
we have the required result.

\smallskip
 Concerning about second result, one needs to obtain a lower estimate for the sum $S(f, \mathcal{Q}; Y^{u_{0}} )$ under the certain assumption. Below, we sketch the proof to get a lower estimate for the sum $S(f, \mathcal{Q}; Y^{u_{0}} )$.

\smallskip
\noindent
\textbf{A lower bound for the sum $S(f, \mathcal{Q}; Y^{u_{0}} )$:} We recall that multiplicative function $h_{Y}$, supported on square-free positive integers,  is given by: 
\begin{equation*}
\begin{split}
& \qquad \quad h_{Y}(p) =
\begin{cases}
 \alpha \left( \frac{\log p}{\log Y} \right),  \qquad   {\rm if } \qquad    p \le Y \quad{\rm and } \quad  p \nmid N, \\
-2,  \qquad \qquad \quad  {\rm if } \qquad    p > Y \quad{\rm and } \quad p \nmid N,\\
0,  \qquad \qquad  \qquad  {\rm if } \qquad     p \mid N, 
 \end{cases} \\
\end{split}
\end{equation*}
where$\quad \alpha : [0, 1]  \rightarrow{[-2, 2]}$ is such that, $\alpha(0) = 2$ and $\alpha(t) = 2 \cos(\frac{\pi}{m+1})$ if $\frac{1}{m+1} < t \leq \frac{1}{m}$ for $m \in  \mathbf{N}$.

Assume that $\lambda_{f}(n) \ge 0$ for every square-free positive integer $n \le Y,$ which are represented by $\mathcal{Q}(\underline{x}).$  
Then, Hecke relation together with the assumption  $\lambda_{f}(p) \ge 0$ for every prime $p \le Y^{1/m}$ implies that $\lambda_{f}(p) \ge 2 \cos\left(\frac{\pi}{m+1}\right).$ This shows that $\lambda_{f}(p) \ge h_{Y}(p),$ for all primes upto $Y.$ Multiplicativity of these arithmetical functions shows that $\lambda_{f}(n) \ge h_{Y}(n),$ for all $n \le Y.$ Similar to \cite{KLSW2010}, it is observed that for $X= Y^{u_{0}}$ with $u_{0}=4/3$, 
\begin{equation}\label{LBound}
\begin{split}
S(f, \mathcal{Q}; X) & = \sideset{}{^{\flat }} \sum_{n \le X \atop \gcd(n,N) =1 } \lambda_{f}(n) r^{*}_{\mathcal{Q}}(n) \ge \sideset{}{^{\flat }} \sum_{n \le X \atop \gcd(n,N) =1 } h_{Y}(n) r^{*}_{\mathcal{Q}}(n)>0.
\end{split}
\end{equation}
Precisely, to see \eqref{LBound}, let $g_{Q}$ be a multiplicative function defined by convolution identity $\lambda^{*}_{f}(n)  =  (g_{Q} * h^{*}_{Y})(n)$,
where $\lambda^{*}_{f}(n) = \lambda_{f}(n)r^{*}_{\mathcal{Q}}(n)$ and  $h^{*}_{y}(n) = h_{Y}(n)r^{*}_{\mathcal{Q}}(n). $ This gives that \linebreak $g_{Q}(p) =\lambda_{f}(p)r^{*}_{\mathcal{Q}}(p)- h_{Y}(p)r^{*}_{\mathcal{Q}}(p).$ If $p$ is not represented by $\mathcal{Q}(\underline{x}),$ then $g_{Q}(p) =0.$ If $p$ is represented by $\mathcal{Q}(\underline{x}),$ then $g_{Q}(p) \ge 0$ for  all $p$ not dividing $N$ and upto $Y$ by definition of  $ h_{Y}(p).$ For $p >Y,$ with $p \nmid N,$ we have $h_{Y}(p)r^{*}_{\mathcal{Q}}(p) = -4$ and $-2\le \lambda_{f}(p) \le 2.$ Hence, $g_{Q}(p) \ge 4a+4\ge 0$ as  $-1\le a \le 1.$ Here, we have written the value taken by $ \lambda_{f}(p)r^{*}_{\mathcal{Q}}(p)$ as $4a$. Thus, we have $g_{Q}(n) \ge 0$ for all square-free positive integers $n.$ Now, we see that 
\begin{equation*}
\begin{split}
S(f, \mathcal{Q}; X ) & = \sideset{}{^{\flat }} \sum_{n \le X \atop \gcd(n,N) =1 } \lambda_{f}(n) r^{*}_{\mathcal{Q}}(n) =  \sideset{}{^{\flat }} \sum_{n \le X \atop \gcd(n,N) =1 } \sum_{d \mid n} g_{Q}(d) h_{Y}(n/d) r^{*}_{\mathcal{Q}}(n/d)\\
& =  \sideset{}{^{\flat }} \sum_{d \le X \atop \gcd(d,N) =1 }  g_{Q}(d)  \sideset{}{^{\flat }} \sum_{m \le X/d \atop \gcd(m,N) =1 }   h_{Y}(m) r^{*}_{\mathcal{Q}}(m) \ge \sum_{n \le X \atop \gcd(n,N) =1 }   h_{Y}(n) r^{*}_{\mathcal{Q}}(n)
\end{split}
\end{equation*} 
as $g_{Q}(1) =1 $ and $g_{Q}(n )\ge 0$ for each square-free positive integer $n.$ Hence, a positive lower bound for the sum  $S(f, \mathcal{Q}; X)$ occurs and it is given by the sum  $\displaystyle{ \sum_{n \le X^{u_{0}} \atop \gcd(n,N) =1 }   h_{Y}(n) r^{*}_{\mathcal{Q}}(n)}$ for some $u_{0} < 1$ given in \propref{PropLower}.

\bigskip
\noindent 
\textbf{Proof of \thmref{ExtMatKLSW}:}\quad 
Let  $X = Y^{u}$ with $u >1$. 
By definition,  $n_{f, \mathcal{Q}}$ be the largest integer among all $n \in \mathbb{N}$ such that $\lambda_{f}(n)\ge 0,$ with $\gcd(n, N)=1$ and $n = \mathcal{Q}(\underline{x})$ for some $\underline{x} \in \mathbb{Z}^{2}$. Let us write  $n_{f, \mathcal{Q}}= Y.$  In other word, $\lambda_{f}(n)\ge 0,$ for each $n \le Y$ with $\gcd(n, N)=1$ and $n = \mathcal{Q}(\underline{x})$ for some  $\underline{x} \in \mathbb{Z}^{2}.$  A key  idea to estimate $Y$ is to compare lower and upper bound for the sum 
\begin{equation*}\label{Sum-PIBQF}
\begin{split}
S(f, \mathcal{Q}; Y^{u} ) &=  \sideset{}{^{\flat }}\sum_{n= \mathcal{Q}(\underline{x}) \le Y^{u} \atop \gcd(n,N) =1  } \lambda_{f}(n) = \sideset{}{^{\flat }} \sum_{n \le Y^{u} \atop \gcd(n,N) =1  } \lambda_{f}(n) r_{Q}(n).\\
\end{split}
\end{equation*}
From \propref{PropUpper} and \eqref{PBound}, we have  (for $u \le u_{0}$)

\begin{equation*}
\begin{split}
 0< \frac{P(1) L(1, \chi_{D})^{2}}{\Gamma(\alpha_{0})} Y^{u} \log Y^{u} \ll \sideset{}{^{\flat }} \sum_{n \le Y^{u} \atop \gcd(n,N) =1 } h_{Y}(n) r^{*}_{\mathcal{Q}}(n) \le S(f, \mathcal{Q}; Y^{u} )   \ll {(Nk^{2}|D|)}^{\frac{1}{2}+\epsilon} Y^{\frac{u}{2}+\epsilon}.
\end{split}
\end{equation*} 
This shows that 
\begin{equation*}
\begin{split}
 Y^{u/2+\epsilon}   \ll {(Nk^{2}|D|)}^{\frac{1}{2}+\epsilon}  L(1, \chi_{D})^{2}  \qquad \Longrightarrow  \qquad Y \ll {(Nk^{2})}^{\frac{1}{u}+\epsilon} |D|^{\frac{2}{u} +\epsilon}
 \end{split}
\end{equation*} 
Now, we substitute $u = u_{0} = 4/3$ (as $u_{0} =4/3$ such that $\sigma(u_{0})>0$) to get 
$$
Y \ll {(Nk^{2} |D|^{2})}^{\frac{3}{4} + \epsilon} .
$$

\noindent
This completes the proof.


\smallskip
\textbf{Data availability statements:} Data sharing is not applicable to this article as no datasets were generated or analyzed. 



\begin{thebibliography}{0}

\bibitem{Ratna}
R. Acharya, {\it A twist of the Gauss circle problem by holomorphic cusp forms}, Res. number theory 8, 5 (2022). 

\bibitem{blomer}
V. Blomer, {\it Sum of Hecke eigenvalues over values of quadratic polynomials}, Int. Math. Res. Not. IMRN (2008). 

\bibitem{Buell}
 D. A. Buell, {\rm Binary Quadratic Forms: Classical Theory and Modern Computations}, springer - Verlag, New york, 1989.


\bibitem{Cox}
D. A. Cox, {\rm Primes of the form  $x^{2} + n y^{2}$: Fermat, Class Field Theory, and Complex Multiplication},  Pure and Applied Mathematics: A Wiley Series of Texts, Monographs and Tracts {\bf 119},  John Wiley \& Sons, 2014.




\bibitem{Deligne}
P. Deligne, {\it  La conjecture de Weil I. Inst. Hautes Etudes} Sci. Publ. Math. {\bf 43}, (1974), 273--307 .

\bibitem{Jiang-Lu}
Y. Jiang and Guangshi Lu, {\it Sum of coefficients of $L$- functions and applications}, J. Number Theory, {\bf 171} (2017), 56--70. 












\bibitem{Iwa-Koh-Sen}
  H. Iwaniec, W. Kohnen and J. Sengupta, {\it The first negative Hecke eigenvalue}, Int. J. Number Theory {\bf 3} (2007), 355--363.



\bibitem{Iwaniec}
 H. Iwaniec, {\rm Topics in Classical Automorphic Forms}, Grad. Stud. Math.,  {\bf  vol. 17}, American Mathematical Society, Providence, RI, 1997.


 
\bibitem{HIwaniec}
 H. Iwaniec  and E. Kowalski, {\rm Analytic Number Theory}, Amer. Math. Soc. Colloq. Publ.,  {\bf vol. 53}, American Mathematical Society, Providence, RI, 2004.


 



\bibitem{Koh-Sen}  W. Kohnen and J. Sengupta, {\it On the first sign change of Hecke eigenvalues of newforms}. 
Math. Z. {\bf 254 (1)} (2006), 173--184.

\bibitem{KLSW2010}  E. Kowalski, Y.-K. Lau, K. Soundararajan and J. Wu. {\it On modular signs}, Math. Proc. Camb.
Phil. Soc. {\bf 149 (3)} (2010), 389--411.

\bibitem{LAUL} Y.-K. LAU, J.-Y. LIU and J. WU. {\it The first negative coefficient of symmetric square $L$-functions}. Ramanujan J  {\bf 27}(2012), 419--441

\bibitem{Mathematica}
Mathematica software, version 12.1. https://www.wolfram.com/mathematica/ 


\bibitem{KMatomaki} Matom$\ddot{\rm a}$ki, {\it On signs of Fourier coefficients of cusp forms}, 
Math. Proc. Camb. Phil. Soc. 152 (2), 2012, 207--222. 

\bibitem{Pra-Man}
M.K. Pandey and P. Tiwari, {\it On the first sign change of Fourier coefficients of cusp forms at sum of two squares}, Res. Number Theory (2021).

\bibitem{Manish-Lalit}
M.K. Pandey and Lalit Vaishya,
{\it Asymptotic  of  Fourier coefficients of Hecke eigenforms at integers represented by binary quadratic form of fixed discriminant}, (to appear) Publ. Math. Debrecen (2023).


 
 \bibitem{APerelli}
 A. Perelli, {\it General $L$-functions}, Ann. Mat. Pura. Appl., \textbf{130(4)}, 1982, 287-306.

\bibitem{Tenenbaum} 
G. Tenenbaum, {\rm Introduction to Analytic and Probabilistic Number Theory}, Cambridge Stud. Adv. Math., {\bf vol. 163}, Cambridge University Press, Cambridge, 2015.

\bibitem{Lalit} 
Lalit Vaishya, {\it Signs of Fourier coefficients of cusp forms at integers represented by an integral binary quadratic form},  Proc. Indian Acad. Sci.( Math. Sci.), \textbf{Vol} 131, \textbf{No.} 41 (2021).


\bibitem{LalitV}
Lalit Vaishya, {\it  Average estimates and sign change of Fourier coefficients of Hecke eigenforms at integers represented by  binary quadratic forms of fixed discriminant},  Submitted  (2021).  

\bibitem{AZaman}
 Asif Zaman, {\it Primes represented by positive definite binary quadratic forms}, The Quarterly Journal of Mathematics, {\bf vol. 69}, {\bf Issue 4} (2018), 1353-–1386



\end{thebibliography}
\end{document}